\pgfplotsset{compat=1.18}
\newtheorem{theorem}{Theorem}
\theoremstyle{definition}
\theoremstyle{lemma}
\newtheorem{lemma}[theorem]{Lemma}
\theoremstyle{remark}
\newtheorem{remark}[theorem]{Remark}
\newtheorem{assumption}[theorem]{Assumption}
\numberwithin{theorem}{section}
\numberwithin{equation}{section}
\numberwithin{table}{section}
\numberwithin{figure}{section}
\definecolor{aliceblue}{rgb}{0.94, 0.97, 1.0}
\newcommand{\ddiv}{\operatorname{div}}
\newcommand{\calT}{\mathcal{T}}
\newcommand{\R}{\mathbb{R}}
\newcommand{\N}{\mathbb{N}}
\newcommand{\opnorm}[1]{{\big\vert\kern-0.25ex\big\vert\kern-0.25ex\big\vert #1 \big\vert\kern-0.25ex\big\vert\kern-0.25ex\righ\vert}}
\newcommand{\calC}{\mathcal{C}}
\newcommand\dx{\,\mathrm{d}x}
\newcommand\ds{\,\mathrm{d}s}
\newcommand{\frakB}{\mathfrak{B}}
\newcommand{\frakD}{\mathfrak{D}}
\newcommand{\frakE}{\mathfrak{E}}
\newcommand{\V}{H^1_0(\Omega)}
\newcommand{\Vp}{H^{-1}(\Omega)}
\newcommand{\LL}{L^2(\Omega)}
\newcommand\calP{\mathcal{P}}
\newcommand\calE{\mathcal{E}}
\newcommand\calB{\mathcal{B}}
\newcommand{\calN}{\mathcal{N}}
\newcommand{\eps}{\varepsilon}
\renewcommand{\TH}{\mathcal{T}_H}
\newcommand{\VH}{V_H}
\newcommand{\tUH}{\tilde{U}_H}
\newcommand{\tUl}{\tilde{U}_H^{\ell}}
\newcommand{\PiH}{\Pi_H}
\newcommand{\tPiH}{\tilde{\calP}}
\newcommand{\tuH}[1]{\tilde{u}_H^{{#1}}}
\newcommand{\tul}[1]{\tilde{u}_H^{\ell,{#1}}}
\newcommand{\eH}[1]{e_H^\ell(t_{{#1}})}
\newcommand{\tel}[1]{\tilde{e}_H^{\ell}(t_{#1})}
\newcommand{\tzH}[1]{\tilde{z}_H(t_{#1})}
\newcommand{\tzl}[1]{\tilde{z}_H^\ell(t_{#1})}
\newcommand{\vH}{v_H}
\newcommand{\tvH}{\tilde{v}_H}
\newcommand{\tvl}{\tilde{v}_H^{\ell}}
\newcommandx*{\ScalarProduct}[3][1=]{#1(#2,#3)}
\newcommandx*{\SP}[2]{\ScalarProduct{#1}{#2}}
\newcommandx*{\IP}[2]{\ScalarProduct[a]{#1}{#2}}
\newcommand{\Cl}{\calC^{\ell}}
\newcommand{\CCl}{(\calC-\Cl)}
\newcommand{\tB}{\tilde{\frakB}}
\newcommand{\halfpos}{n+\frac{1}{2}}
\newcommand{\halfneg}{n-\frac{1}{2}}
\newcommand{\Dt}{D_\tau}
\newcommand{\supp}{\mathrm{supp}}
\newcommand{\CFL}{\Con[b]\Con[inv]\sqrt{(\tfrac{1}{4}-\theta)\beta (1+\Cellsq)}}
\newcommand{\Cell}{\Con[loc]\ell^{\frac{d}{2}}\exp(-\Con[d]\ell)}
\newcommand{\Cellsq}{\Con[loc]^2\ell^{d}\exp(-2\Con[d]\ell)}
\newcommandx*{\Con}[1][1]{C_{\mathrm{#1}}}
\newcommandx*{\con}[1][1]{c_{#1}}
\newcommand*{\Nb}[2]{{\mathtt{N}^{#1}}(#2)}
\newcommand{\splod}{$p$-LOD-$\theta$}
\definecolor{darkgreen}{rgb}{0.09, 0.45, 0.27}
\definecolor{myOrange}{rgb}{0.85000,0.32500,0.09800}
\begin{document}
%
%
%=======================================================================================
%=========  Title / Contents
%=======================================================================================
\title[High-order multiscale method for waves]{
A Higher-Order Multiscale Method for the Wave Equation}
\author[F.~Krumbiegel, R.~Maier]{Felix Krumbiegel$^\dagger$ and Roland Maier$^\dagger$}
\address{${}^{\dagger}$ Institute for Applied and Numerical Mathematics, Karlsruhe Institute of Technology, Englerstr.~2, 76131 Karlsruhe, Germany}
\email{\{felix.krumbiegel,roland.maier\}@kit.edu}
\date{\today}
%
%
%=======================================================================================
%=========  Abstract
%=======================================================================================
\begin{abstract}
In this paper we propose a multiscale method for the acoustic wave equation in highly oscillatory media. We use a higher-order extension of the localized orthogonal decomposition method combined with a higher-order time stepping scheme and present rigorous a-priori error estimates in the energy-induced norm. We find that in the very general setting without additional assumptions on the coefficient beyond boundedness, arbitrary orders of convergence cannot be expected but that increasing the polynomial degree may still considerably reduce the size of the error. Under additional regularity assumptions, higher orders can be obtained as well. Numerical examples are presented that confirm the theoretical results.
\end{abstract}

\maketitle

{\tiny {\bf Keywords.} Wave equation, multiscale method, theta scheme, higher-order}\\
\indent
{\tiny {\bf AMS subject classification.} 65M12, 65M60, 35L05}

%
%
%=======================================================================================
%=========  Introduction
%=======================================================================================
\section{Introduction}\label{sec:intro}
Modeling wave propagation through heterogeneous media typically requires a discretization scale that resolves the heterogeneities that are encoded by an oscillatory coefficient in the underlying partial differential equation (PDE). 
This is particularly challenging if the coefficient varies on a very fine scale~$0<\varepsilon\ll 1$.
It is well-known that standard approaches such as the finite element method (FEM) need to be defined on a mesh with mesh size~$h<\varepsilon$ to obtain reasonable approximation properties of the corresponding discrete solution, even in a macroscopic sense.
To avoid expensive computations on rather fine scales, especially in a time-dependent scenario where a system of equations has to be solved in every time step, specifically designed multiscale methods present an alternative.

Respective approaches for the wave equation are, e.g., presented in~\cite{AbdG11, EngHR11} based on the heterogeneous multiscale method~\cite{EE03, EE05, AbdEEV12} or in~\cite{OwhZ17} using so-called gamblets~\cite{Owh17, OwhS19}. Further methods are presented in~\cite{OwhZ08} and~\cite{JiaEG10, JiaE12} based on suitable global coordinate transformations. Operator-based upscaling for the wave equation is considered in~\cite{VdoMK05,KorM06}.
The localized orthogonal decomposition (LOD) method is another prominent multiscale approach and constructs coarse-scale spaces with built-in information on the coefficient on finer scales. The method is originally stated in an elliptic setting~\cite{HenP13,MalP14,MalP20} and based on the ideas of the variational multiscale method~\cite{Hug95, HugFMQ98}. It provably works under minimal structural assumptions on the coefficient and allows computations to be localized to patches such that global fine-scale computations can be completely avoided.
Previously, the LOD method has been applied to the wave equation combined with, e.g., a Crank--Nicolson scheme in~\cite{AbdH17}, a leapfrog scheme in~\cite{PetS17,MaiP19}, or a mass-lumped leapfrog scheme in~\cite{GeeM23} for the temporal discretization. The approach has also been used in the context of a damped wave equation in~\cite{LjuMP21} or in connection with waves on spatial networks~\cite{GorLM23}. For the treatment of space- and time-dependent coefficients with slow variations in time, suitable updates of the spatial discretization are required as studied in~\cite{MaiV22}. We refer to the survey book chapter~\cite{AbdH17b} for further details and methods for the heterogeneous wave equation.

In this work, we focus on a multiscale approach in the spirit of the LOD method for the spatial discretization. Aiming for higher-order convergence rates in the multiscale setting, we apply a higher-order variant of the LOD method based on piecewise polynomials up to degree $p$, which we refer to as the {$p$-LOD} method. The method has been proposed in~\cite{Mai20,Mai21} based on a gamblet formulation, and was further refined in~\cite{DonHM23}.
When aiming for higher orders in time, there is the need for a suitable time discretization scheme as common methods such as the leapfrog or Crank--Nicolson scheme only achieve second-order rates. Possible methods include the $\theta$-scheme used in \cite{Kar11}, see also the previous works~\cite{LimKD07,KimL07}, which is a special case of the Newmark scheme~\cite{New59}, also referred to as {Newmark-$\beta$ method}. With an appropriate choice of $\theta$, convergence rates up to fourth order in time can be achieved.
Other possibilities involve rewriting the wave equation into a first order formulation and applying higher-order Runge-Kutta schemes as used, e.g., in~\cite{GroMM15,AlmM17}.

Here, we combine the $p$-LOD method with the $\theta$-scheme as used in~\cite{Kar11}. 
Interestingly, arbitrarily high convergence rates in space as observed for the elliptic setting with minimal assumptions in~\cite{Mai21,DonHM23} can only be obtained for smooth coefficients in the context of the wave equation, which is also to be expected from our theoretical investigations. Nevertheless, increasing the polynomial degree turns out to have a positive effect on the size of the error with respect to the number of degrees of freedom, even without additional smoothness assumptions.

The remainder of this paper is structured as follows. In Section~\ref{sec:method}, we introduce the general setting and the considered multiscale method regarding the spatial and the temporal discretization. In Section~\ref{sec:error}, stability and error estimates are derived, and numerical examples are presented in Section~\ref{sec:examples}. In particular, we compare our approach with the classical first-order LOD method for the wave equation, as, e.g., considered in~\cite{AbdH17}.

%
%
%=======================================================================================
%=========  Ideal method
%=======================================================================================
\section{A higher-order multiscale method}\label{sec:method}
\subsection{Model problem}\label{subsec:model}
We consider the wave equation with homogeneous Dirichlet boundary conditions
\begin{equation}\label{eq:model}
\begin{aligned}
\ddot{u}-\ddiv(A\nabla u)=&\ f\qquad &&\text{in~} (0,T)\times\Omega,\\
u(0)=&\ u_0\qquad &&\text{in~}\Omega,\\
\dot{u}(0)=&\ v_0\qquad &&\text{in~}\Omega,\\
u\vert_{\partial\Omega}=&\ 0\qquad &&\text{in~}(0,T),
\end{aligned}
\end{equation}
where $\Omega\subset\R^d$ is a polygonal, convex, bounded Lipschitz domain for $d \in\{1,2,3\}$. We assume that~$u_0\in \V$, $v_0\in\LL$, $f\in L^2(0,T;\LL)$ and $A\in L^{\infty}(\Omega)$ with $\alpha \leq A(x)\leq\beta$ for almost all $x\in\Omega$, where~$0<\alpha\leq \beta<\infty$. Specifically, we have coefficients in mind that can vary on a fine scale $0<\eps\ll 1$ but a precise dependence on $\eps$ is not required. 
Note that we might as well consider matrix-valued coefficients and more involved boundary conditions, but we restrict ourselves to the case of scalar coefficients and zero Dirichlet boundary conditions to simplify the presentation.

The variational formulation of equation~\eqref{eq:model} reads as follows: we seek a solution $u\in L^2(0,T;\V)$ with $\dot{u}\in L^2(0,T;\LL)$ and $\ddot{u}\in L^2(0,T;\Vp)$ such that
\begin{equation}\label{eq:var_form}
\langle\ddot{u},v\rangle_{\Vp\times \V} +a(u,v)=( f,v)_{\LL}
\end{equation}
for all $v\in\V$ with $u(0)=u_0$ and $\dot{u}(0)=v_0$ and with the bilinear form $a(u,v)\coloneqq(A\nabla u,\nabla v)_{\LL}$. Under the above assumptions on the data, the variational formulation has a unique solution, see, e.g., \cite[Ch.~3]{LioM72}.

\subsection{Discretization in space}\label{subsec:space_discr}
As mentioned above, classical spatial discretizations based on, e.g., the finite element method need to resolve fine-scale features of the coefficient in order to achieve meaningful approximations in the first place. Otherwise, pre-asymptotic effects may be observed; see also the numerical experiments in~\cite{GeeM23}. To obtain reasonable approximations with a moderate amount of degrees of freedom, more involved constructions are necessary.
Here, we construct a problem-adapted finite-dimensional subspace of~$H^1_0(\Omega)$ following the ideas of~\cite{DonHM23} based on the earlier work~\cite{Mai21} regarding higher-order multiscale approaches in the spirit of the LOD. 
First, a finite-dimensional polynomial space is set up, which is altered to obtain a conforming construction and lastly the space is tailored to the problem at hand.

Let $\{\TH\}_{H>0}$ be a family of regular decompositions of $\Omega$ into quasi-uniform $d$-rectangles; cf.~\cite[Chs. 2 \& 3]{Cia78}. For a given mesh size $H > 0$ and a fixed polynomial degree $p\in\N_0$, we define $\VH$ as the discontinuous and $\hat{V}_H$ as the continuous spaces of piecewise polynomials up to partial degree $p$ with respect to the mesh~$\TH$, i.e.,
\begin{equation*}
	\begin{aligned}
		\VH&\coloneqq\{v\in\LL\mid\forall K\in\TH:v\vert_K\text{ is a polynomial of partial degree}\leq p\},\\
		\hat{V}_H&\coloneqq\{v\in C^0(\Omega)\mid\forall K\in\TH:v\vert_K\text{ is a polynomial of partial degree}\leq p\}.
	\end{aligned}
\end{equation*}
Note that also $d$-parallelograms could be considered. In that case the definition of a basis of $\VH$ would differ and be done using a reference element with a linear map to each parallelogram of the mesh. In this case, the constants in the estimates possibly change with the shape of the parallelogram. For simplicity, we restricted ourselves to $d$-rectangles. To adjust the current setting accordingly, the results in~\cite[Lem.~3.4, Thm.~3.5, and Cor.~3.6]{Mai21} need to be transferred to $d$-parallelograms, which is possible using~\cite{Geo08}. 

For any subdomain $S\subset\Omega$, we define the restricted spaces
\begin{equation*}
\VH(S)=\{v\in\VH\mid \supp(v)\subset S\}, \qquad
\hat{V}_H(S)=\{v\in\hat{V}_H\mid \supp(v)\subset S\},
\end{equation*}
and denote by $\PiH\colon\LL\to\VH$ the $L^2$-projection onto $\VH$. That is, $\PiH$  is defined element-wise for~$v\in\LL$ by
\begin{equation}\label{eq:Pi_definition}
\big((\PiH v)\vert_K,w\big)_{L^2(K)}=(v\vert_K,w)_{L^2(K)}
\end{equation}
for all $w\in\VH(K)$ and all $K\in\TH$. Choosing $w=(\PiH v)\vert_K$ as a test function in \eqref{eq:Pi_definition}, we directly obtain the stability estimate 
\begin{equation}\label{eq:Pi_stability}
\|\PiH v\|_{L^2(K)}\leq \|v\|_{L^2(K)},
\end{equation}
for all $v\in L^2(K)$. Furthermore, there exists a constant $\Con[pr]>0$ independent of $H$ (but dependent on $p$) such that 
\begin{equation}\label{eq:Cpr}
\|(1-\PiH)v\|_{L^2(K)}\leq\Con[pr] H^k| v|_{H^k(K)};
\end{equation}
for all $v\in H^k(K)$ with $k\leq p+1$, where $1$ denotes the identity operator on $H^1(K)$; see, e.g., \cite{Sch98, HouSS02, Geo03}. Note that the constant $\Con[pr]$ scales like $\big(\frac{(p+1-k)!}{(p+1+k)!}\big)^{\frac{1}{2}}$, and specifically for~$k=1$ we have $\Con[pr]\lesssim (p+1)^{-1}$. 
There also exists a constant $\Con[inv]>0$ dependent on~$p$ such that the inverse estimate
\begin{equation}\label{eq:Cinv}
	\|\nabla v_H\|_{L^2(K)}\leq \Con[inv]H^{-1}\|v_H\|_{L^2(K)}
\end{equation}
holds for $v_H\in \VH$.
Summing up the inequalities \eqref{eq:Pi_stability}--\eqref{eq:Cinv}, respectively, over all elements of the mesh, each inequality can be generalized to the whole domain~$\Omega$ with the use of element-wise gradients on the right-hand sides.

We denote with $\frakD=\bigcup_{K\in\TH}\{\Lambda_{K,j}\}_{j=1}^M$ for $M=(p+1)^d$ the basis of $\VH$ consisting of tensor-product shifted Legendre polynomials on the element $K$. That is, $\Lambda_{K,j}$ is of form $\prod_{k=1}^d\sqrt{2p_k+1}\,L_{p_k}(2x_k-1)$, where~$L_{p_k}$ is the one-dimensional Legendre polynomial of degree $p_k\leq p$ defined on $[-1,1]$. While the non-conformity of the space~$\VH$ is very beneficial in terms of locality, it complicates the construction of a multiscale space in the spirit of the LOD method. To circumvent the issue while still being able to make use of the locality of $\VH$, we instead use for each polynomial a certain (local)~$H^1_0(K)$-conforming bubble function, whose $L^2$-projection coincides with the polynomial itself. \cite[Corollary 3.6]{Mai21} states the existence of functions $\frakB'=\bigcup_{K\in\TH}\{b_{K,j}\}_{j=1}^M$ such that 
\begin{equation}\label{eq:bubble_proj}
b_{K,j}\in H^1_0(K)\quad\textrm{and}\quad\PiH b_{K,j}=\Lambda_{K,j}
\end{equation}
for each $K\in\TH$ and $j=1,\dots,M$. The space $\operatorname{span}(\frakB')$ is conforming and has the same dimension as $\VH$. In the top row of Figure~\ref{fig:bubble}, three constant Legendre polynomials~$\Lambda_{K,1}$ are depicted (left) with their corresponding bubble functions $b_{K,1}$ (right). Note that the depicted bubble functions fulfill the condition~\eqref{eq:bubble_proj} only for polynomial degrees~$p=0,1$. For higher polynomial degrees, the bubble functions need to be suitably adjusted, see~\cite[Rem.~7.1]{DonHM23}. The remark also addressed the practical calculation of the bubble functions.

\begin{figure}
\centering
\includegraphics[scale=.35]{./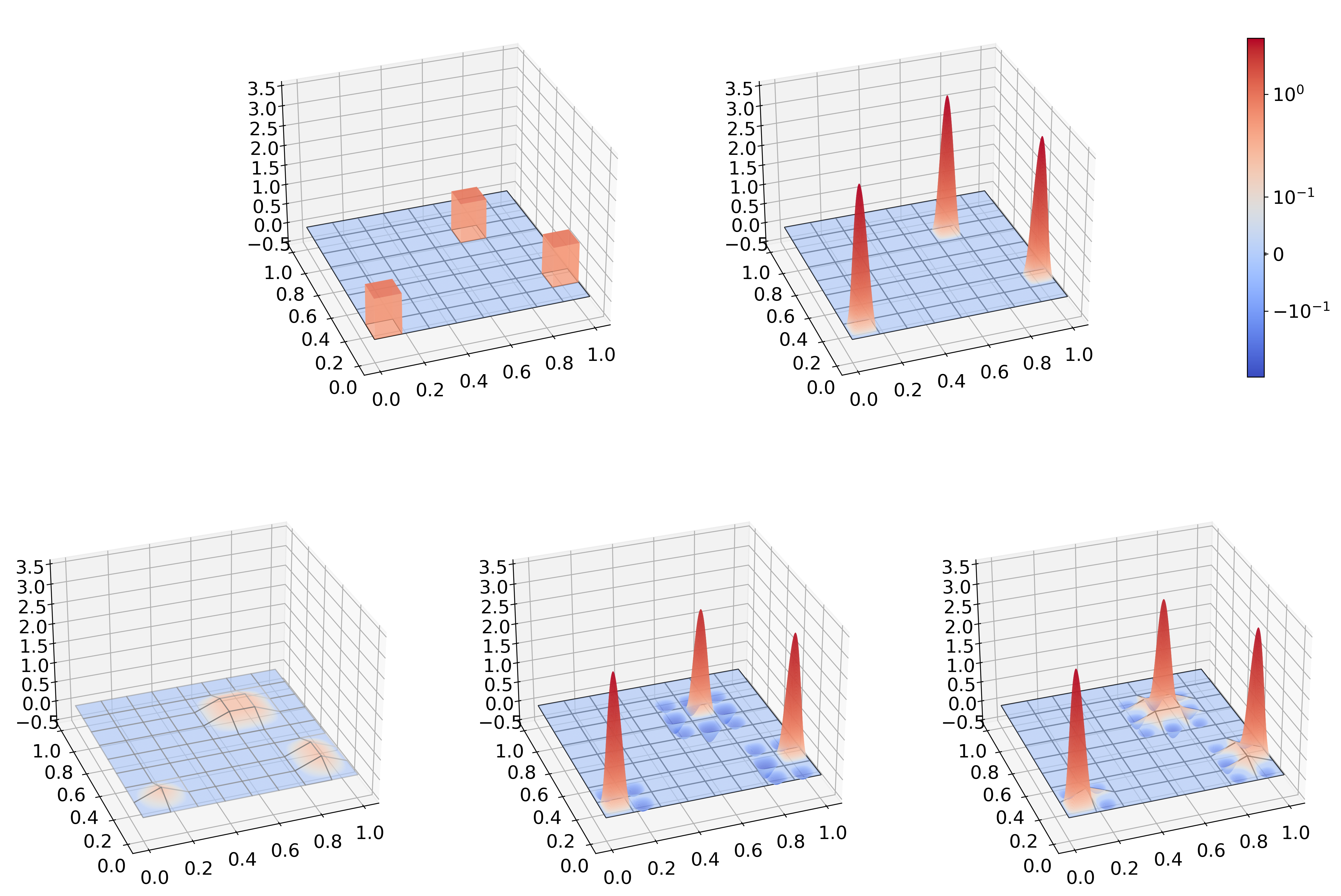}
\caption{Plots of different basis functions: three Legendre polynomials $\Lambda_{K,1}$ (top left), the corresponding bubble functions $b_{K,1}$ (top right), the functions $\iota_K$ (bottom left), $\nu_K$ (bottom middle), and the newly created basis function $\iota_K+\nu_K$ (bottom right).}\label{fig:bubble}
\end{figure}

It turns out that the (true) locality of the bubble functions $b_{K,1}$, which approximate the constant Legendre polynomials $\Lambda_{K,1}$ cause problems when constructing a localized multiscale method. Therefore, we suitably extend the support of the corresponding bubble function by one layer of elements without changing its $L^ 2$-projection. Note that this is a rather technical detail which is discussed and justified in~\cite{DonHM23} based on the earlier works~\cite{AltHP21,HauP22}. 
Here we present a practical construction of such extended bubble functions and the corresponding multiscale spaces. 

First, we define for a subdomain $S$ the patch of size $k\in\N$  recursively as the set~$\Nb{k}{S}=\Nb{1}{\Nb{k-1}{S}}$, $k>1$ with $\Nb{1}{S}\coloneqq\{K\in\TH\mid \overline K\cap \overline S\neq\emptyset\}$.
Let~$\iota_K$ for $K \in \TH$ be such that
\begin{equation*}
\iota_K\in \hat{V}_H^{1}(\Nb{1}{K}),\quad\iota_K(\hat{x})=\begin{cases}
\frac{1}{2^d},\quad& \hat{x}\in \overline K\setminus\partial\,\Nb{1}{K},\\
0,\quad &\hat{x}\in\partial\,\Nb{1}{K},\end{cases}
\end{equation*}
where $\hat{V}_H^1$ refers to the space $\hat{V}_H$ with the specific choice $p=1$ and $\hat{x}$ denotes a vertex of the mesh $\TH$.
Furthermore, for each element $K\in\TH$ we define $\nu_K$ by
\begin{equation*}
\nu_K=\sum_{G\in\TH:\,G\subset \Nb{1}{K} }\sum_{j=1}^{M}c_{K,G,j}\,b_{G,j},\quad c_{K,G,j}=\int\limits_{G}\big(b_{K,1}-\iota_K\vert_{G}\big)\Lambda_{G,j}\dx.
\end{equation*}
Further, we define the set of functions
\begin{equation*}
\frakB=\bigcup_{K\in\TH}\big(\{\iota_K+\nu_K\}\cup\{b_{K,j}\}_{j=2}^M\big),
\end{equation*}
and the corresponding linear operator $\calB_H:\VH\to\calB_H\VH\eqqcolon U_H$ which is determined by
\begin{equation*}
\calB_H\Lambda_{K,j}=\begin{cases}
\iota_K+\nu_K,\quad &j=1,\\
b_{K,j},\quad &j=2,\dots,M.
\end{cases}
\end{equation*}
Note that $\frakB$ is a basis of $U_H$.
Furthermore, it holds $\PiH(\iota_K+\nu_K)=\Lambda_{K,1}$. We emphasize that $\operatorname{span} \frakB \neq \operatorname{span} \frakB'$ but $\PiH (\operatorname{span} \frakB) = \PiH(\operatorname{span} \frakB') = \VH$, which is a necessary property for the construction below. The plots in the bottom row of \ref{fig:bubble} show examples of the newly created basis functions~$\iota_K+\nu_K$ (right) with their respective parts $ \iota_K$ (left) and~$\nu_K$ (middle) for the Legendre polynomial and its bubble of the top row.
Roughly speaking, the new function $\iota_K+\nu_K$ is composed of a piecewise bilinear continuous part on~$\Nb{1}{K}$, which is then adapted by multiple bubbles to fulfill the equality $\PiH(\iota_K+\nu_K)=\Lambda_{K,1}$.

\begin{remark}\label{rem:Cb}
The construction of the operator $\calB_H$ resembles the construction of~$\calP_H$ in~\cite{DonHM23} in the sense of $\calB_H\circ \Pi_H=\calP_H$, such that in particular $\calB_H v_H=\calP_Hv_H$ holds for $v_H\in V_H$, and therefore we get from~\cite[eq.~(3.7)]{DonHM23}, that there exists a constant $\Con[b]>0$, which scales with $p$ like $\Con[b]\lesssim (p+1)^2$, such that
\begin{equation}\label{eq:Cb}
\|\nabla\calB_H\PiH v\|_{L^2(K)}\leq\Con[b] \|\nabla\PiH  v\|_{L^2(\Nb{1}{K})}
\end{equation}
for $v\in L^2(\Nb{1}{K})$ and each element $K\in\TH$.
\end{remark}

Our next step is to define a localized operator $\Cl$ as in \cite[Sec. 5.2]{DonHM23}, that corrects basis functions in a way that the resulting basis has improved approximation properties regarding the underlying multiscale problem. Let $K\in\TH$, $\ell\in\N$, and define the kernel space $W(\Nb{\ell}{K})\coloneqq\ker\PiH\cap H^1_0(\Nb{\ell}{K})$. The \emph{element correction} $\Cl_K\colon\V\to W(\Nb{\ell}{K})$ for $v\in\V$ is then given as the solution~$\Cl_Kv\in W(\Nb{\ell}{K})$ to
\begin{equation}\label{eq:Cl_definition}
a(\Cl_K v,w)=\ a\vert_K(v,w)
\end{equation}
for all $w\in W(\Nb{\ell}{K})$, where $a\vert_K(v, w)\coloneqq\int_KA\nabla v\cdot\nabla w\dx$ denotes the inner product on~$\V$ restricted to the element $K$. Note that the left-hand side in equation \eqref{eq:Cl_definition} is by definition of the spaces restricted to the patch $\Nb{\ell}{K}$. The \emph{correction operator} is defined as the sum of its local parts, i.e., $\Cl=\sum_{K\in\TH}\Cl_K$. If we choose $\ell\in\N$ sufficiently large, such that for each $K\in\calT_H$ the left-hand side of \eqref{eq:Cl_definition} is global, we formally set $\ell=\infty$ and abbreviate $\calC=\calC^\infty$. The choice $\ell=\infty$ is optimal in the sense of convergence properties but in practice this operator is cumbersome to deal with due to globally defined element corrections. In practical calculations, it suffices to use the operator $\Cl$ for moderate choices of $\ell$ instead of $\calC$ due to a localization result with exponentially decreasing errors as stated in the following lemma.
\begin{lemma}\textnormal{(\cite[Lem.~5.2]{DonHM23})}\label{lem:CminCl}
There exist constants $\Con[loc],\Con[d]>0$ independent of $H$ and $\ell$, but dependent on $p$ such that for all $v\in\V$ and $\ell\in\N$ it holds 
\begin{equation}\label{eq:CminCl}
\|\nabla(\calC-\Cl)v\|_{\LL}\leq\Con[loc]\ell^{\frac{d}{2}}\exp(-\Con[d]\ell)\|\nabla v\|_{\LL}.
\end{equation}
\end{lemma}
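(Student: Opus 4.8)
The plan is to exploit that the localized corrector is the energy-orthogonal projection of the global one, and to couple this with an exponential (Caccioppoli-type) decay estimate for the global corrector, carried out elementwise and then reassembled. First I would fix $K\in\TH$ and record the Galerkin orthogonality: since $W(\Nb{\ell}{K})\subseteq W(\Omega)$ and both $\calC_K v$ and $\Cl_K v$ solve \eqref{eq:Cl_definition} against test functions from $W(\Omega)$ and $W(\Nb{\ell}{K})$ respectively, subtracting the two identities gives $a((\calC_K-\Cl_K)v,w)=0$ for all $w\in W(\Nb{\ell}{K})$. Hence $\Cl_K v$ is the $a$-orthogonal projection of $\calC_K v$ onto $W(\Nb{\ell}{K})$, and by coercivity and boundedness of $a$,
\[
\|\nabla(\calC_K-\Cl_K)v\|_{\LL}\lesssim \inf_{w\in W(\Nb{\ell}{K})}\|\nabla(\calC_K v-w)\|_{\LL}.
\]
It therefore suffices to produce one good competitor $w$ and to show that $\calC_K v$ carries exponentially little energy far from $K$.

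Second --- and this is the technical heart --- I would build a kernel-preserving, support-localized truncation of $\calC_K v$. Choose a cutoff $\eta\in\hat V_H^{1}$ with $\eta=0$ on $\Nb{\ell-2}{K}$, $\eta=1$ outside $\Nb{\ell-1}{K}$, and $\|\nabla\eta\|_{L^\infty}\lesssim H^{-1}$, and set $\psi\coloneqq(1-\eta)\calC_K v\in H^1_0(\Nb{\ell-1}{K})$. Because $\PiH\calB_H=\mathrm{id}$ on $\VH$, the function $w\coloneqq\psi-\calB_H\PiH\psi$ satisfies $\PiH w=0$, lies in $H^1_0$, and --- since $\calB_H$ spreads support by at most one layer of elements, cf.\ \eqref{eq:Cb} --- is supported in $\Nb{\ell}{K}$, so $w\in W(\Nb{\ell}{K})$. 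Using $\PiH\calC_K v=0$ one has $\PiH\psi=-\PiH(\eta\calC_K v)$, so both $\eta\calC_K v$ and $\calB_H\PiH\psi$ are controlled, via \eqref{eq:Cb}, \eqref{eq:Pi_stability}, and \eqref{eq:Cpr}, by $\|\nabla\calC_K v\|_{\LL}$ on the far region (up to the one-element spread of $\calB_H$) together with an $H^{-1}\|\calC_K v\|_{\LL}$ term on the thin transition annulus. Consequently $\|\nabla(\calC_K v-w)\|_{\LL}$ is bounded by the energy of $\calC_K v$ outside $\Nb{\ell-3}{K}$.

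Third I would prove that this far-field energy decays geometrically. For each intermediate level $m$ I repeat the construction above with a cutoff transitioning between $\Nb{m}{K}$ and $\Nb{m+1}{K}$, obtaining a kernel function $w_m$ supported in $\Omega\setminus K$; testing the global corrector equation \eqref{eq:Cl_definition} with $w_m$ makes the right-hand side $a\vert_K(v,w_m)$ vanish, whence $a(\calC_K v,w_m)=0$. Writing $E_m\coloneqq\|\nabla\calC_K v\|_{L^2(\Omega\setminus\Nb{m}{K})}^2$ and estimating the annulus contributions as in the previous step yields a recursion $E_{m+1}\le\gamma E_m$ with a fixed $\gamma\in(0,1)$ depending only on $\alpha,\beta,\Con[b],\Con[pr]$ (hence on $p$). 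Iterating gives $E_\ell\lesssim\exp(-2\Con[d]\ell)\,\|\nabla\calC_K v\|_{\LL}^2$, and together with the stability estimate $\|\nabla\calC_K v\|_{\LL}\lesssim\|\nabla v\|_{L^2(K)}$ (from testing \eqref{eq:Cl_definition} with $w=\calC_K v$) this controls the single-element localization error by $\exp(-\Con[d]\ell)\|\nabla v\|_{L^2(K)}$.

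Finally I would reassemble. Each $(\calC_K-\Cl_K)v$ is supported in $\Nb{\ell}{K}$, so at most $\lesssim\ell^d$ of these supports overlap at any point; a Cauchy--Schwarz argument over the overlapping patches then gives
\[
\|\nabla(\calC-\Cl)v\|_{\LL}^2\lesssim \ell^d\sum_{K\in\TH}\|\nabla(\calC_K-\Cl_K)v\|_{\LL}^2\lesssim \ell^d\exp(-2\Con[d]\ell)\sum_{K\in\TH}\|\nabla v\|_{L^2(K)}^2,
\]
and taking square roots produces the claimed factor $\ell^{d/2}\exp(-\Con[d]\ell)$, with $\Con[loc]$ absorbing the overlap and interpolation constants. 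The step I expect to be the genuine obstacle is the second one: constructing $w$ so that it is simultaneously in $\ker\PiH$, conforming in $H^1_0$, supported in $\Nb{\ell}{K}$, and energy-bounded by $\calC_K v$ on a thin annulus. The non-conformity of $\VH$ forces the use of the $\calB_H$-correction, and tracking its one-layer support growth together with its $p$-dependent stability constant in \eqref{eq:Cb} cleanly through the annular iteration is where the real care is needed.
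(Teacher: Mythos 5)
The paper does not prove this lemma at all --- it is quoted directly from \cite[Lem.~5.2]{DonHM23} --- so your proposal can only be compared with the argument in that reference, whose architecture (Galerkin orthogonality, kernel-preserving cutoff competitor, iterative decay, reassembly) you have correctly reproduced. Your steps 1--3 are sound: the orthogonality $a((\calC_K-\Cl_K)v,w)=0$ for $w\in W(\Nb{\ell}{K})$ and the resulting best-approximation bound are correct; the truncation $w=\psi-\calB_H\PiH\psi$ is exactly the right device, since $\PiH\calB_H=\mathrm{id}$ on $\VH$ gives $\PiH w=0$, the one-layer support spread of $\calB_H$ keeps $w$ inside $\Nb{\ell}{K}$, and the annulus terms $H^{-1}\|\calC_Kv\|_{L^2}$ are absorbed via $\PiH\calC_Kv=0$ and \eqref{eq:Cpr}; and the iteration in step 3 gives geometric decay (up to the harmless modification that the recursion runs between levels $m$ and $m+k$ for a fixed small $k$, owing to the layer spread of the cutoff and of $\calB_H$, rather than between consecutive levels).

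The genuine gap is in your reassembly step. The claim that each $(\calC_K-\Cl_K)v$ is supported in $\Nb{\ell}{K}$ is false: $\Cl_Kv$ is supported there, but $\calC_Kv$ solves a problem whose test space is $W(\Omega)$, so it --- and hence the difference $z_K=(\calC_K-\Cl_K)v$ --- has global support in general. Its energy outside $\Nb{\ell}{K}$ is exponentially small, but not zero, so the pointwise finite-overlap bound $\big|\sum_K\nabla z_K\big|^2\lesssim\ell^d\sum_K|\nabla z_K|^2$ does not apply. The naive repair of summing the single-element bounds by the triangle inequality,
\begin{equation*}
\Big\|\nabla\sum_{K\in\TH} z_K\Big\|_{\LL}\leq\sum_{K\in\TH}\|\nabla z_K\|_{\LL}\lesssim e^{-\Con[d]\ell}\sum_{K\in\TH}\|\nabla v\|_{L^2(K)}\leq e^{-\Con[d]\ell}\,(\#\TH)^{1/2}\,\|\nabla v\|_{\LL},
\end{equation*}
picks up a factor $(\#\TH)^{1/2}\approx H^{-d/2}$, which contradicts the assertion that $\Con[loc]$ is independent of $H$; splitting each $z_K$ into a part near $\Nb{\ell}{K}$ plus a tail cut off at a fixed larger radius suffers from the same defect. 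The actual argument has to use the full layer-wise decay $\|\nabla\calC_Kv\|_{L^2(\Omega\setminus\Nb{m}{K})}\lesssim e^{-\Con[d]m}\|\nabla v\|_{L^2(K)}$ for \emph{all} $m\geq\ell$: the near parts are summed with your $\ell^{d/2}$ overlap counting, while the global tails are paired ring by ring against the test function and summed by a joint Cauchy--Schwarz argument in $K$ and the ring index $m$, exploiting that each mesh element lies in only $\mathcal{O}(m^{d-1})$ rings of radius $m$ so that the geometric factor $e^{-\Con[d]m}$ dominates the polynomial ring count. This is where the $\ell^{d/2}$ is actually earned; your steps 1--3 supply all the ingredients for it, but step 4 as written would not assemble into a proof of the stated $H$-independent estimate.
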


We are now set to define an appropriate multiscale space. We define
\begin{equation*}
\tilde{b}^\ell_{K, j}=\begin{cases}
(1-\Cl)(\iota_K+\nu_K),\quad &j=1,\\
(1-\Cl)b_{K,j},\quad &j=2,\dots,M,
\end{cases}
\end{equation*}
and the multiscale space $\tUl=\operatorname{span}\tB^\ell$ given by the basis $\tB^\ell=\bigcup_{K\in\TH}\{\tilde{b}^\ell_{K,j}\}_{j=1}^M$. The basis functions in $\tB^\ell$ are locally supported on $ \Nb{\ell}{K}$ for some $K\in\TH$ (except for $\tilde{b}^\ell_{K,1}$ with support on $\Nb{\ell+1}{K}$), and it holds~$\Pi_H\tUl=\VH$ with both spaces having the same dimension. The new basis functions include a correction by the problem-dependent operator $\Cl$ such that they achieve better approximation properties for the underlying problem. 
Analogously to above, we set $\tUH=\tilde{U}^\infty_H$ for $\ell=\infty$. Note that the construction of the basis~$\tB^\ell$ resembles the construction in \cite[Sec.~6]{DonHM23}, but the presentation here follows a more practical approach. We emphasize that the basis functions $\tilde{b}_{K,j}$ for $j\geq 2$ can be computed from $\Lambda_{K,j}$ directly; see~\cite{DonHM23}. Thus, only the extended bubble functions~$\iota_K+\nu_K$ need actually to be calculated. With the multiscale space~$\tUl$ we are now ready to state our method in the next subsection. 
First, however, we require some further definitions. We introduce for a series of functions $\{v^n\}_{n=0}^N$ with $v^n\in H^1(\Omega)$ for~$n=0,\dots,N$ the average of two consecutive functions by
\begin{equation*}
	v^{n+\frac{1}{2}}\coloneqq\tfrac{v^{n+1}+v^{n}}{2},
\end{equation*}
and the first and second discrete time derivatives for such a series and a time step $\tau$ by
\begin{equation*}
	\begin{aligned}
		\Dt v^{\halfpos}&\coloneqq\tfrac{v^{n+1}-v^n}{\tau},\quad
		\Dt^2v^{n}&\coloneqq\tfrac{v^{n+1}-2v^n+v^{n-1}}{\tau^2}.
	\end{aligned}
\end{equation*}
Moreover, we use $a\lesssim b$ if $a\leq Cb$ with a generic constant $C>0$.

\subsection{$p$-LOD-$\theta$ method}\label{subsec:splodth}
The multiscale space $\tUl$ introduced in the previous subsection can now be used as trial and test space for the variational formulation~\eqref{eq:var_form} and combined with the $\theta$-scheme as used in~\cite{Kar11} to obtain the \emph{$p$-LOD-$\theta$} method: 
find $\mathbf{\tilde{u}_H^\ell}=\{\tul{n}\}_{n=0}^{N}$ with $\tul{n}\in\tUl$ for $n=0,\dots,N$ such that for $n\geq 1$
\begin{equation}\label{eq:splodth}
\tau^{-2}\big(\tul{n+1}-2\tul{n}+\tul{n-1},\tvH\big)_{\LL}+a\big(\tul{n;\theta},\tvH\big)=\big(f^{n;\theta},\tvH\big)_{\LL}
\end{equation}
for all $\tvH\in\tUl$ and appropriately chosen $\tul{0},\tul{1}\in\tUl$, where $\tau=\frac{T}{N}$, $\theta\in[0,\frac{1}{2}]$ is fixed, and the weighted $\theta$-difference is defined by
\begin{equation}\label{eq:theta_difference}
\tul{n;\theta}=\theta\tul{n+1}+(1-2\theta)\tul{n}+\theta\tul{n-1}.
\end{equation}
For the right-hand side term $f$, we have to suitably choose $f^{n;\theta}$. If it is possible to evaluate~$f$ pointwise, we define $f^{n;\theta}$ analogously to \eqref{eq:theta_difference} where $f^n=f(n\tau)$. 
The time discretization is a generalization of multiple schemes, e.g., for $\theta=0$ we obtain the leapfrog scheme and for~$\theta=\tfrac{1}{4}$ the Crank--Nicolson scheme.

In the following, we show that the error of the $p$-LOD-$\theta$ method applied to the wave equation can be estimated by
\begin{equation*}
\max_{0\leq n<N}\big\|\nabla\big(\tul{\halfpos}-u(t_{\halfpos})\big)\big\|_{\LL}\lesssim H^r+\tau^s,
\end{equation*}
where the spatial convergence rate $r$ depends on the regularity of the initial conditions~$u_0,v_0$, the right-hand side $f$, and potentially even the coefficient $A$. The temporal convergence rate $s$ depends on the choice of $\theta$. For rough heterogeneous coefficients in the elliptic setting, a spatial discretization with the $p$-LOD method leads a convergence rate of order $r=p+2$ if the right-hand side is smooth enough, but interestingly this rate cannot be transferred to the wave equation for very rough coefficients, even for smooth data~($u_0,v_0,f$). Instead, we find that the order is capped at~$r\leq 2$ in the very general setting. The reason is given in Remark~\ref{rem:jk} below based on the theory in Section~\ref{sec:error}, which is also backed by the numerical examples in Section \ref{sec:examples}. 
The order $s$ of the temporal error depends on the choice of $\theta$, we obtain order $s=2$ if $\theta \neq \tfrac{1}{12}$ and order $s=4$ if $\theta=\frac{1}{12}$.

%
%
%=======================================================================================
%=========  Error analysis
%=======================================================================================
\section{Stability and error analysis}\label{sec:error}

The proof of an error estimate for the \splod~method is based on energy estimates as in~\cite{Chr09, Jol03}. We split the total error of the method into three parts, namely the temporal error, the localization error and the spatial error, which are treated separately. 
We estimate the errors pointwise in the energy-induced discrete norm $\|D_\tau\cdot\|_{\LL}+\|\nabla\cdot\|_{\LL}$ but emphasize that discrete $L^\infty(0,T;L^2(\Omega))$-error bounds are also possible (cf.~Remark~\ref{rem:L2}) with similar techniques as used in~\cite{Kar11}. 

\subsection{Energy estimation}\label{subsec:nrg}
In order to show stability of the $p$-LOD-$\theta$ method, we first state the useful property that the discrete energy
\begin{equation}\label{eq:nrg_definition}
\begin{aligned}
\calE^{\halfpos}
\coloneqq \frac{1}{2}&\Big[\big\|\Dt\tul{\halfpos}\big\|_{\LL}^2+a\big(\tul{\halfpos},\tul{\halfpos}\big)
+\tau^2(\theta-\tfrac{1}{4}) a\big(\Dt\tul{\halfpos},\Dt\tul{\halfpos}\big)\Big]
\end{aligned}
\end{equation}
is conserved by our numerical scheme.

\begin{lemma}[Energy conservation]\label{lem:nrg}
The \splod~method~\eqref{eq:splodth} conserves the discrete energy~\eqref{eq:nrg_definition} in the sense that
\begin{equation}\label{eq:nrg_estimate}
\big(f^{n;\theta},\tul{n+1}-\tul{n-1}\big)_{\LL}=2\big(\calE^{\halfpos}-\calE^{\halfneg}\big).
\end{equation}
In particular, for a source term $f\equiv 0$ we obtain energy conservation in the classical sense,
\begin{equation*}
\calE^{\halfpos}=\calE^{\halfneg}=\calE^{\frac{1}{2}}.
\end{equation*}
\end{lemma}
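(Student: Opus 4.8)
The plan is to verify the energy identity \eqref{eq:nrg_estimate} by a direct computation, testing the scheme \eqref{eq:splodth} with the natural choice $\tvH = \tul{n+1}-\tul{n-1}$ and then massaging the three resulting terms into the telescoping difference $2(\calE^{\halfpos}-\calE^{\halfneg})$. This is the standard discrete analogue of the continuous energy method (multiply the wave equation by $\dot u$ and integrate), so the test function is chosen precisely because $\tul{n+1}-\tul{n-1} = \tau(\Dt\tul{\halfpos}+\Dt\tul{\halfneg})$ plays the role of a centered discrete velocity. First I would substitute this test function into \eqref{eq:splodth}, so that the right-hand side immediately becomes $(\fth,\tul{n+1}-\tul{n-1})_{\LL}$, matching the left side of the claimed identity, and reduce the problem to showing that the left-hand side of the scheme equals $2(\calE^{\halfpos}-\calE^{\halfneg})$.

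Next I would treat the two terms on the left of \eqref{eq:splodth} separately. For the mass term $\tau^{-2}(\tul{n+1}-2\tul{n}+\tul{n-1},\tul{n+1}-\tul{n-1})_{\LL}$, I would write $\tul{n+1}-2\tul{n}+\tul{n-1}=(\tul{n+1}-\tul{n})-(\tul{n}-\tul{n-1})$ and $\tul{n+1}-\tul{n-1}=(\tul{n+1}-\tul{n})+(\tul{n}-\tul{n-1})$, so that the inner product telescopes into $\|\tul{n+1}-\tul{n}\|_{\LL}^2-\|\tul{n}-\tul{n-1}\|_{\LL}^2$; dividing by $\tau^2$ turns this exactly into $\|\Dt\tul{\halfpos}\|_{\LL}^2-\|\Dt\tul{\halfneg}\|_{\LL}^2$, which is twice the difference of the kinetic parts of $\calE$. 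The key algebraic device here is the difference-of-squares factorization $(a-b,a+b)=\|a\|^2-\|b\|^2$, valid because $a(\cdot,\cdot)$ and the $L^2$ inner product are symmetric.

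For the stiffness term $a(\tul{n;\theta},\tul{n+1}-\tul{n-1})$, I would expand the $\theta$-difference \eqref{eq:theta_difference} and use bilinearity and symmetry of $a$. The plan is to split $\tul{n;\theta}=\tul{\halfpos}+\tul{\halfneg})/2$-type combinations; more precisely, writing $\tul{n;\theta}=\tfrac14(\tul{n+1}+2\tul{n}+\tul{n-1})+(\theta-\tfrac14)(\tul{n+1}-2\tul{n}+\tul{n-1})$ separates the Crank--Nicolson part from the $\theta$-correction. Pairing the first piece against $\tul{n+1}-\tul{n-1}$ telescopes via the same difference-of-squares identity into $a(\tul{\halfpos},\tul{\halfpos})-a(\tul{\halfneg},\tul{\halfneg})$, and pairing the second piece yields the $\tau^2(\theta-\tfrac14)$ term since $\tul{n+1}-2\tul{n}+\tul{n-1}=\tau^2\Dt^2\tul{n}$ and $\tul{n+1}-\tul{n-1}=\tau(\Dt\tul{\halfpos}+\Dt\tul{\halfneg})$, producing $a(\Dt\tul{\halfpos},\Dt\tul{\halfpos})-a(\Dt\tul{\halfneg},\Dt\tul{\halfneg})$ after another telescoping. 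Summing the mass and stiffness contributions reconstructs exactly $2(\calE^{\halfpos}-\calE^{\halfneg})$, and the special case $f\equiv 0$ follows immediately by induction on $n$.

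The main obstacle I anticipate is purely bookkeeping: correctly regrouping the $\theta$-correction term so that the algebraic splitting cleanly isolates the $(\theta-\tfrac14)$ coefficient and matches the discrete-derivative notation $\Dt$ and $\Dt^2$. There is no analytic difficulty—no inequalities, no regularity, no use of the coefficient bounds—since this is an exact identity relying only on the symmetry and bilinearity of $a$ and $(\cdot,\cdot)_{\LL}$; the only care needed is to keep the index shifts between the $n+\tfrac12$ and $n-\tfrac12$ quantities consistent throughout the telescoping.
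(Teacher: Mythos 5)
Your proposal is correct and follows exactly the paper's approach: the paper's proof consists of choosing the same test function $\tvH=\tul{n+1}-\tul{n-1}$ in \eqref{eq:splodth} and carrying out ``straight-forward calculations,'' which are precisely the difference-of-squares telescoping identities you spell out for the mass term, the Crank--Nicolson part, and the $(\theta-\tfrac14)$ correction. Your decomposition $\tul{n;\theta}=\tfrac14(\tul{n+1}+2\tul{n}+\tul{n-1})+(\theta-\tfrac14)(\tul{n+1}-2\tul{n}+\tul{n-1})$ is the right bookkeeping to make those calculations explicit, so the argument is complete.
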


\begin{proof}
We choose $\tvH=\tul{n+1}-\tul{n-1}$ as a test function in \eqref{eq:splodth} and obtain~\eqref{eq:nrg_estimate} with straight-forward calculations.
\end{proof}

\subsection{Stability}\label{subsec:stability}
With the energy property~\eqref{eq:nrg_estimate}, it suffices to show that the discrete energy of the $p$-LOD-$\theta$ method is positive at all times to obtain stability. Dependent on the choice of $\theta$, we get an unconditionally stable method for $\theta\geq \frac{1}{4}$ and require a CFL condition to ensure stability for $\theta < \frac{1}{4}$. 

\begin{theorem}[Stability]\label{thm:stability}
For~$\tfrac{1}{4}\leq\theta\leq\tfrac{1}{2}$, the $p$-LOD-$\theta$ method \eqref{eq:splodth} is unconditionally stable.
For~$0\leq\theta<\tfrac{1}{4}$, the $p$-LOD-$\theta$ method \eqref{eq:splodth} is stable if the CFL condition
\begin{equation}\label{eq:CFL}
	\tau \leq \frac{1-\delta}{\Con[CFL]} H,\quad \Con[CFL]= \CFL
\end{equation}
holds for some $\delta>0$, where $\beta$ is the upper bound of $A$, $\Con[loc]$ and $\Con[d]$ are defined in Lemma~\ref{lem:CminCl}, $\Con[b]$ is defined in \eqref{eq:Cb}, and $\Con[inv]$ in \eqref{eq:Cinv}. In the unconditionally stable case, we formally set $\delta = 1$. In both cases, there exists a constant $\Con[s]>0$ dependent on $\delta$, $\alpha$, and $\beta$, such that
\begin{equation}\label{eq:stability_estimate}
\begin{aligned}
\big\| \Dt\tul{\halfpos}\big\|_{\LL}+\big\|\nabla&\tul{\halfpos}\big\|_{\LL}\\
&\leq  \Con[s]\Big(\big\| \Dt\tul{\frac{1}{2}}\big\|_{\LL}+\sqrt{\|\tul{1}\|_{\LL}\|\tul{0}\|_{\LL}}\\
&\qquad\qquad+\tau\sqrt{\theta}\big\|\nabla \Dt\tul{\frac{1}{2}}\big\|_{\LL} +\sum_{k=1}^n\tau\big\|f^{k;\theta}\big\|_{\LL}\Big).
\end{aligned}
\end{equation}
\end{theorem}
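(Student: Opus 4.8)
The plan is to combine the energy-conservation identity \eqref{eq:nrg_estimate} with a coercivity bound for the discrete energy \eqref{eq:nrg_definition}, and then to close the resulting recursion by a discrete Grönwall argument. Throughout I treat $\sqrt{\calE^{\halfpos}}$ as the natural quantity to propagate in time. Note first that the left-hand side of \eqref{eq:stability_estimate} is controlled by it: since $a(v,v)\ge\alpha\|\nabla v\|_{\LL}^2$, any lower bound of the form $\calE^{\halfpos}\ge\tfrac12\big(\gamma\,\|\Dt\tul{\halfpos}\|_{\LL}^2+a(\tul{\halfpos},\tul{\halfpos})\big)$ with a constant $\gamma=\gamma(\delta)>0$ immediately gives $\|\Dt\tul{\halfpos}\|_{\LL}+\|\nabla\tul{\halfpos}\|_{\LL}\lesssim\sqrt{\calE^{\halfpos}}$, with the implicit constant depending only on $\alpha$ and $\gamma$ (hence on $\delta$).

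The crux, and the step I expect to be the main obstacle, is establishing this coercivity, i.e.\ absorbing the possibly negative term $\tau^2(\theta-\tfrac14)\,a(\Dt\tul{\halfpos},\Dt\tul{\halfpos})$ when $\theta<\tfrac14$. For $\theta\ge\tfrac14$ this term is nonnegative and $\gamma=1$ works unconditionally. For $\theta<\tfrac14$ I would prove the inverse-type estimate
\[
a(w_H,w_H)\le\beta\,(1+\Cellsq)\,\Con[b]^2\Con[inv]^2\,H^{-2}\,\|w_H\|_{\LL}^2,\qquad w_H\in\tUl,
\]
which is precisely $\Con[CFL]^2/(\tfrac14-\theta)\cdot H^{-2}$. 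To obtain it, write $w_H=(1-\Cl)\calB_H v_H$ with $v_H=\PiH w_H\in\VH$ (recall $\PiH\Cl=0$, so $\PiH$ restricted to $\tUl$ is a bijection onto $\VH$). Splitting $1-\Cl=(1-\calC)+(\calC-\Cl)$ and using that $(1-\calC)\calB_H v_H$ is $a$-orthogonal to the kernel space containing $(\calC-\Cl)\calB_H v_H$, the cross term drops and $a(w_H,w_H)=a\big((1-\calC)\calB_H v_H,(1-\calC)\calB_H v_H\big)+a\big((\calC-\Cl)\calB_H v_H,(\calC-\Cl)\calB_H v_H\big)$; this is exactly where the additive factor $\Cellsq$ (rather than a squared $1+\Cell$) originates. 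Bounding the first summand by $a(\calB_H v_H,\calB_H v_H)\le\beta\|\nabla\calB_H v_H\|_{\LL}^2$ (energy minimality of the ideal corrector) and the second by $\beta\Cellsq\|\nabla\calB_H v_H\|_{\LL}^2$ via Lemma~\ref{lem:CminCl}, then applying \eqref{eq:Cb}, the inverse estimate \eqref{eq:Cinv}, and finally $\|v_H\|_{\LL}=\|\PiH w_H\|_{\LL}\le\|w_H\|_{\LL}$ from \eqref{eq:Pi_stability}, yields the claim. Applying it to $w_H=\Dt\tul{\halfpos}\in\tUl$ and invoking the CFL condition \eqref{eq:CFL} gives $\tau^2(\tfrac14-\theta)\,a(\Dt\tul{\halfpos},\Dt\tul{\halfpos})\le(1-\delta)^2\|\Dt\tul{\halfpos}\|_{\LL}^2$, so $\gamma=1-(1-\delta)^2>0$ (consistently $\gamma=1$ for $\delta=1$ in the unconditional case).

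Next I would exploit energy conservation. Telescoping \eqref{eq:nrg_estimate} gives $\calE^{\halfpos}=\calE^{\frac12}+\tfrac12\sum_{k=1}^n\big(f^{k;\theta},\tul{k+1}-\tul{k-1}\big)_{\LL}$. Writing $\tul{k+1}-\tul{k-1}=\tau\big(\Dt\tul{k+\frac12}+\Dt\tul{k-\frac12}\big)$, applying Cauchy--Schwarz, and using the coercivity bound $\|\Dt\tul{m+\frac12}\|_{\LL}\le\sqrt{2/\gamma}\,\sqrt{\calE^{m+\frac12}}\le\sqrt{2/\gamma}\,M_n$ with $M_n\coloneqq\max_{0\le m\le n}\sqrt{\calE^{m+\frac12}}$, I obtain $M_n^2\le\calE^{\frac12}+\sqrt{2/\gamma}\,M_n\sum_{k=1}^n\tau\|f^{k;\theta}\|_{\LL}$. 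Solving this quadratic inequality in $M_n$ (via $\sqrt{x+y}\le\sqrt{x}+\sqrt{y}$) yields $M_n\lesssim\sqrt{\calE^{\frac12}}+\sum_{k=1}^n\tau\|f^{k;\theta}\|_{\LL}$, which already produces the forcing sum on the right-hand side of \eqref{eq:stability_estimate} and reduces everything to bounding the initial energy.

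Finally I would estimate $\sqrt{\calE^{\frac12}}$ from above. The first term in \eqref{eq:nrg_definition} contributes $\|\Dt\tul{\frac12}\|_{\LL}$ directly. For $\theta\ge\tfrac14$ the corrector term is bounded by $\tau^2\theta\,a(\Dt\tul{\frac12},\Dt\tul{\frac12})\le\tau^2\theta\beta\|\nabla\Dt\tul{\frac12}\|_{\LL}^2$, giving the term $\tau\sqrt{\theta}\,\|\nabla\Dt\tul{\frac12}\|_{\LL}$, while for $\theta<\tfrac14$ it is negative and may be discarded (so the term is in any case a valid upper bound). The potential part $a(\tul{\frac12},\tul{\frac12})+\tau^2(\theta-\tfrac14)a(\Dt\tul{\frac12},\Dt\tul{\frac12})$ equals $\theta\big(a(\tul1,\tul1)+a(\tul0,\tul0)\big)+(1-2\theta)\,a(\tul1,\tul0)$; estimating its cross term by Cauchy--Schwarz together with the inverse-type estimate of the second paragraph is where the geometric-mean contribution $\sqrt{\|\tul1\|_{\LL}\|\tul0\|_{\LL}}$ arises. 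Collecting these bounds gives $\sqrt{\calE^{\frac12}}$ in the stated form, and combining with $\|\Dt\tul{\halfpos}\|_{\LL}+\|\nabla\tul{\halfpos}\|_{\LL}\lesssim\sqrt{\calE^{\halfpos}}\le M_n$ from the first paragraph produces \eqref{eq:stability_estimate} with a constant $\Con[s]$ depending on $\delta$, $\alpha$, and $\beta$.
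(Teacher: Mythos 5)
Your coercivity argument and your treatment of the forcing term coincide with the paper's proof. The inverse-type estimate you propose -- including the orthogonal splitting $1-\Cl=(1-\calC)+(\calC-\Cl)$ that produces the additive factor $1+\Cellsq$, followed by \eqref{eq:Cb}, \eqref{eq:Cinv}, and \eqref{eq:Pi_stability} -- is exactly the paper's derivation of \eqref{eq:stab_1minCl}--\eqref{eq:stability_delta}, with the same constant $c_\delta=1-(1-\delta)^2$; and your max-plus-quadratic-inequality closure of the recursion is only a cosmetic variant of the paper's argument, which divides the energy increment by $\sqrt{\calE^{k+\frac{1}{2}}}+\sqrt{\calE^{k-\frac{1}{2}}}$ and telescopes $\sqrt{\calE^{k+\frac{1}{2}}}-\sqrt{\calE^{k-\frac{1}{2}}}\leq\tfrac{1}{\sqrt{2c_\delta}}\tau\big\|f^{k;\theta}\big\|_{\LL}$.

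The gap is in your bound for the initial energy $\calE^{\frac{1}{2}}$, and it is not cosmetic. First, after expanding the potential part as $\theta\big(a(\tul{1},\tul{1})+a(\tul{0},\tul{0})\big)+(1-2\theta)\,a(\tul{1},\tul{0})$ you only address the cross term; the diagonal terms are left unbounded, and for $\theta>0$ they cannot be controlled by the admissible right-hand side quantities with an $H$-independent constant (take $\tul{1}=\tul{0}$ oscillatory: every $\Dt$-term and the geometric mean reduce to $\|\tul{0}\|_{\LL}$ or zero, while the diagonal terms are of size $\|\nabla\tul{0}\|_{\LL}^2$). One must first regroup, as the paper does via the polarization identity
\begin{equation*}
a\big(\tul{\frac{1}{2}},\tul{\frac{1}{2}}\big)-\tfrac{\tau^2}{4}a\big(\Dt\tul{\frac{1}{2}},\Dt\tul{\frac{1}{2}}\big)=a\big(\tul{1},\tul{0}\big),
\end{equation*}
so that $\calE^{\frac{1}{2}}=\tfrac{1}{2}\big[\|\Dt\tul{\frac{1}{2}}\|_{\LL}^2+a(\tul{1},\tul{0})+\theta\tau^2a(\Dt\tul{\frac{1}{2}},\Dt\tul{\frac{1}{2}})\big]$. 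Second, for the remaining cross term the paper does \emph{not} use an inverse estimate: it bounds $a(\tul{1},\tul{0})\leq\beta\|\tul{1}\|_{\LL}\|\tul{0}\|_{\LL}$ directly, which is precisely what yields a constant depending only on $\delta$, $\alpha$, $\beta$. Your route -- Cauchy--Schwarz in $a$ followed by $a(w_H,w_H)\leq\Con[b]^2\Con[inv]^2\beta(1+\Cellsq)H^{-2}\|w_H\|_{\LL}^2$ -- gives instead $\sqrt{a(\tul{1},\tul{0})}\lesssim\Con[b]\Con[inv]\sqrt{\beta(1+\Cellsq)}\,H^{-1}\sqrt{\|\tul{1}\|_{\LL}\|\tul{0}\|_{\LL}}$: this constant blows up as $H\to0$ and depends on $p$ and $\ell$ through $\Con[b]$, $\Con[inv]$, $\Con[loc]$, $\Con[d]$, so it cannot produce \eqref{eq:stability_estimate} as stated; in the unconditionally stable case $\theta\geq\tfrac{1}{4}$, where no CFL relation between $\tau$ and $H$ is available, such a factor cannot be absorbed at all. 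To be fair, the paper's own step here is delicate -- $a(\tul{1},\tul{0})\leq\beta\|\tul{1}\|_{\LL}\|\tul{0}\|_{\LL}$ is dimensionally consistent only if those norms are read as gradient norms, so your instinct that an $L^2$--$L^2$ product requires an inverse estimate is sound, and the issue is harmless downstream since Lemma~\ref{lem:zeta} applies the estimate with $\zeta^0=0$. Still, as a reconstruction of the theorem as claimed, your argument does not close.
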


\begin{proof}
It is easy to see that $\calE^{n+\frac{1}{2}}\geq 0$ for $\theta\geq\frac{1}{4}$. Now, let $\theta<\frac{1}{4}$. Employing the definition of $\calC$, we have
\begin{equation*}
\begin{aligned}
a((1-\calC)v_H,(1-\calC)v_H)
&=a(v_H,v_H)-a(\calC v_H,\calC v_H)\leq\beta\|\nabla v_H\|_{\LL}^2.
\end{aligned}
\end{equation*}
With~\eqref{eq:CminCl}, we further estimate 
\begin{equation}
\begin{aligned}\label{eq:stab_1minCl}
a((1-\Cl)v_H, (1-\Cl)v_H)&=a((1-\calC)v_H, (1-\calC)v_H)+a(\CCl v_H, \CCl v_H)\\
&\leq \beta(1+\Cellsq)\|\nabla v_H\|_{\LL}^2.
\end{aligned}
\end{equation}
Using~\eqref{eq:Pi_stability}, \eqref{eq:Cb}, \eqref{eq:Cinv}, and \eqref{eq:stab_1minCl}, we can thus show that the last term in the discrete energy \eqref{eq:nrg_definition} is bounded by
\begin{equation}
\begin{aligned}\label{eq:stability_delta}
\tau^2(\tfrac14 -\theta)\, a\big(&\Dt\tul{\halfpos},\Dt\tul{\halfpos}\big)
\\&=\tau^2(\tfrac14 -\theta)\,a\big((1-\Cl)\calB_H\PiH \Dt\tul{\halfpos},(1-\Cl)\calB_H\PiH \Dt\tul{\halfpos}\big)\\
&\leq \tau^2(\tfrac14 -\theta)\,\beta(1+\Cellsq) \Con[b]^2\Con[inv]^2H^{-2}\big\|\Dt\tul{\halfpos}\big\|_{\LL}^2.
\end{aligned}
\end{equation}
Using the CFL condition~\eqref{eq:CFL} in estimate~\eqref{eq:stability_delta} thus yields the positivity of the discrete energy,
\begin{equation*}
\calE^{n+\frac{1}{2}}\geq\frac{1}{2}\,a\big(\tul{n+\frac{1}{2}},\tul{n+\frac{1}{2}}\big) +\frac{1}{2}\,c_\delta\big\|\Dt\tul{\halfpos}\big\|_{\LL}^2\geq 0,
\end{equation*}
where, $c_\delta = 1-(1-\delta)^2$. 
Next, we show the estimate~\eqref{eq:stability_estimate}. It holds
\begin{equation}\label{eq:stability_norm_estimate}
\big\| \Dt\tul{\halfpos}\big\|_{\LL}+\big\|\nabla\tul{n+\frac{1}{2}}\big\|_{\LL}\leq\sqrt{2}\max\Big\{\sqrt{\tfrac{2}{c_\delta}},\sqrt{\tfrac{2}{\alpha}}\Big\}\sqrt{\calE^{n+\frac{1}{2}}}.
\end{equation}
To bound the last factor $\sqrt{\calE^{n+\frac{1}{2}}}$ on the right-hand side, we use once again the energy property~\eqref{eq:nrg_estimate} and get
\begin{equation*}
\begin{aligned}
\calE^{n+\frac{1}{2}}-\calE^{n-\frac{1}{2}}&=\tfrac{1}{2}\tau \big(f^{n;\theta},\Dt\tuH{\halfpos}+\Dt\tuH{\halfneg}\big)_{\LL}\\
&\leq \tfrac{1}{\sqrt{2c_\delta}}\tau\big\| f^{n;\theta }\big\|_{\LL}\Big(\sqrt{\calE^{n+\frac{1}{2}}}+\sqrt{\calE^{n-\frac{1}{2}}}\Big),
\end{aligned}
\end{equation*}
which iteratively leads to
\begin{equation}\label{eq:stability_nrg_estimate}
\sqrt{\calE^{n+\frac{1}{2}}}\leq \sqrt{\calE^{\frac{1}{2}}}+\sum_{k=1}^{n}\tfrac{1}{\sqrt{2c_\delta}}\tau\big\| f^{k;\theta }\big\|_{\LL}.
\end{equation}
Lastly, we bound the initial energy $\sqrt{\calE^{\frac{1}{2}}}$ in \eqref{eq:stability_nrg_estimate}. Re-ordering the terms of the energy \eqref{eq:nrg_definition} leads to 
\begin{align*}
	\calE^{\frac{1}{2}}&=\tfrac{1}{2}\Big[\big(\Dt\tul{\frac{1}{2}},\Dt\tul{\frac{1}{2}}\big)_{\LL}+\Big(a\big(\tul{\frac{1}{2}},\tul{\frac{1}{2}}\big)-\tfrac{1}{4}\tau^2a\big(\Dt\tul{\frac{1}{2}},\Dt\tul{\frac{1}{2}}\big)\Big)\\
	&\qquad+\theta\tau^2a\big(\Dt\tul{\frac{1}{2}},\Dt\tul{\frac{1}{2}}\big)\Big]\\
	&=\tfrac{1}{2}\Big[\big(\Dt\tul{\frac{1}{2}},\Dt\tul{\frac{1}{2}}\big)_{\LL}+a\big(\tul{1},\tul{0}\big)+\theta\tau^2a\big(\Dt\tul{\frac{1}{2}},\Dt\tul{\frac{1}{2}}\big)\Big].
\end{align*}
This allows us to estimate the initial energy by
\begin{equation}
\begin{aligned}\label{eq:stability_init_nrg_estimate}
\sqrt{\calE^{\frac{1}{2}}}
&\leq\frac{1}{\sqrt{2}}\Big(\big\|\Dt\tul{\frac{1}{2}}\big\|_{\LL}+\sqrt{\beta  \|\tul{1}\|_{\LL}\|\tul{0}\|_{\LL}}
+\tau\sqrt{\theta\beta}\big\|\nabla \Dt\tul{\frac{1}{2}}\big\|_{\LL}\Big).\\
\end{aligned}
\end{equation}
Combining the estimate~\eqref{eq:stability_norm_estimate} with the bounds~\eqref{eq:stability_nrg_estimate} and~\eqref{eq:stability_init_nrg_estimate} yields the stability estimate~\eqref{eq:stability_estimate}, where
\begin{equation*}
\Con[s] =\max\Big\{\sqrt{\tfrac{2}{c_\delta}},\sqrt{\tfrac{2}{\alpha}}\Big\}\max\Big\{\tfrac{1}{\sqrt{c_\delta}},\sqrt{\beta}\Big\}. \qedhere
\end{equation*}
\end{proof}

\begin{remark}
Note that the constants $\Con[b]$ and $\Con[inv]$ depend on $p$, thus the CFL condition depends on $p$ as well. While the theory in~\cite{DonHM23} predicts a quadratic scaling with respect to $p$, the practical scaling of the constants appears to be much better.
\end{remark}

\subsection{Error estimate for the \splod~method}\label{subsec:error}
In this subsection, we present and prove a full error estimate for the \splod~method. 
First, we give the general assumptions to obtain the convergence results.

\begin{assumption}\label{ass:regularity}
Assume that
\begin{itemize}
\item[] (A0)\; $f\in C^{5}([0,T];H^{k}(\Omega))\;\;\text{for some}\;\; k\in\N_0$, 
\item[] (A1)\; $u(0)=u_0\in \V,\quad\partial_t u(0) = v_0\in \V$,
\item[] (A2)\; $\partial_t^mu(0)\coloneqq \partial_t^{m-2}f(0)+\ddiv(A\nabla (\partial_t^{m-2}u(0)))\in \V,\quad m=2,\dots, 5$,
\item[] (A3)\; $\partial_t^{6}u(0)\coloneqq\partial_t^{4}f(0)+\ddiv(A\nabla(\partial_t^{4} u(0)))\in \LL$,
\item[] (A4)\; There exists a constant $\Con[init]^{\,0}>0$ (independent of $\varepsilon$) such that
\begin{equation*}
	\sum_{m=0}^{5}\|\partial_t^mu(0)\|_{\V}+\|\partial_t^{6}u(0)\|_{\LL}\leq \Con[init]^{\,0}.
\end{equation*}
\end{itemize}
\end{assumption}

\begin{remark}\label{rem:regularity}
Assumption \ref{ass:regularity} is also referred to as \emph{well-prepared and compatible of order~$5$}; cf.~\cite[Def.~4.5]{AbdH17}. Under this assumption, we obtain
\begin{equation*}
\partial_{t}^{6}u\in C([0,T];\LL),\quad\partial_{t}^mu\in C([0,T];\V),\quad m=0,\dots,5,
\end{equation*}
based on the result in \cite[Ch.~3]{LioM72} applied to the variational formulation~\eqref{eq:var_form} and its temporal derivatives. Furthermore, we can bound the norms of the solution from above and denote with $\Con[data]^{\,0}>0$ a generic constant (independent of the fine scale $\varepsilon$) such that
\begin{equation*}
\sum_{m=0}^{5}\|\partial_t^mu\|_{C([0,T];\V)}+\|\partial_{t}^{6}u\|_{C([0,T];\LL)}\lesssim \|f\|_{C^{5}([0,T];H^k(\Omega))}+\Con[init]^{\,0}\leq\Con[data]^{\,0}
\end{equation*}
holds. Note that the assumption~(A4) in that context is mainly required to ensure that the corresponding norms do not scale negatively with the oscillation scale on which the coefficient varies. 

Assumption~\ref{ass:regularity} with $k=0$ is the most general to show the desired convergence properties of our method. In order to further improve the rates, additional assumptions are required, specifically that $\partial_t^mu\in C([0,T];H^{j+1}(\Omega))$ for some $j\geq 1$ and $m\in\N$. Again, the conditions can be fulfilled by appropriate assumptions on the initial data, the right-hand side, the domain, and the coefficient. As this is not the target regime of this paper, the corresponding assumptions are presented in Appendix~\ref{app:regularity}. In the respective setting, we have to bound the norms of the solution and its temporal derivatives in their respective norms by a modified factor $\Con[data]^{\,j}\varepsilon^{-j}$ that depends on the fine-scale $\varepsilon$ and an adjusted constant depending on the regularity parameter $j$ and the corresponding norms of the initial data and right-hand side. 
\end{remark}

\begin{theorem}[Error of the \splod~method]\label{thm:error}
Suppose Assumption \ref{ass:regularity} holds for some~$k\in\N_0$ and additionally let $u\in C^4([0,T];H^{j+1}(\Omega))$ for some $j\in\N_0$ (we note that the case $j=0$ is covered by Assumption~\ref{ass:regularity}). Let $r=\min\{k+1,j+2,p+2\}$ and~$\ell\gtrsim r|\log H|$. Further, let $\mathbf{\tilde{u}^\ell_H}=\{\tul{n}\}_{n=0}^N$ be the solution to the \splod~method defined in~\eqref{eq:splodth} with the initial conditions defined below in \eqref{eq:initial} and~$u$ the solution to~\eqref{eq:var_form}. For $\theta<\frac14$, let the CFL condition \eqref{eq:CFL} hold. Then with~$t_n=n\tau$, we have
\begin{equation}\label{eq:error}
\Big\|\Dt\tul{\halfpos}-\frac{u(t_{n+1})-u(t_n)}{\tau}\Big\|_{\LL}+\big\|\nabla\big(\tul{n+\frac{1}{2}}-u(t_{n+\frac{1}{2}})\big)\big\|_{\LL}\lesssim (H^r+\tau^s)\,\Con[data]^{\,j}\varepsilon^{-j}
\end{equation}
with $s=2$ if $\theta\neq\frac{1}{12}$ and $s=4$ if $\theta=\frac{1}{12}$. In particular, for $j=0$ the result is independent of $\varepsilon$.
\end{theorem}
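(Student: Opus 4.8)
The plan is to run a discrete energy argument on a projected error, splitting the total error into a localization part, a spatial part, and a temporal part, exactly as announced before the statement. The engine is the stability estimate \eqref{eq:stability_estimate} of Theorem~\ref{thm:stability}: it bounds the energy-induced norm of any solution of a discrete wave equation in $\tUl$ by the $\ell^1$-in-time norm of its right-hand side together with initial contributions. The whole proof then amounts to producing an error equation of precisely this form and to estimating its data in terms of $H^r$ and $\tau^s$.

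First I would introduce the localized elliptic (Ritz) projection $\calR_H^\ell\colon\V\to\tUl$, defined by $a(\calR_H^\ell v,w)=a(v,w)$ for all $w\in\tUl$, which is well defined by coercivity of $a$ and carries the spatial and the localization error simultaneously. For the ideal projection $\calR_H$ (the case $\ell=\infty$) the $a$-orthogonality gives $v-\calR_Hv\in\ker\PiH$; combining this with the self-adjointness of $\PiH$ and the Poincar\'e-type bound $\|w\|_{\LL}\le\Con[pr]H\|\nabla w\|_{\LL}$ for $w\in\ker\PiH$ (from \eqref{eq:Cpr} with $k=1$) yields $\|\nabla(v-\calR_Hv)\|_{\LL}\lesssim H\,\|(1-\PiH)\mathcal{A}v\|_{\LL}$, where $\mathcal{A}=-\ddiv(A\nabla\cdot)$. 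Taking $v=u(t)$ and using the PDE in the form $\mathcal{A}u(t)=f(t)-\ddot u(t)$ together with \eqref{eq:Cpr} gives $\|\nabla(u-\calR_Hu)(t)\|_{\LL}\lesssim H^{r}\,\Con[data]^{\,j}\eps^{-j}$, the three caps $k+1$, $j+2$, $p+2$ in $r$ stemming respectively from the regularity of $f$, of $\ddot u$, and from the projection order $p+1$; the same computation at the $L^2$ level produces the super-approximation $\|(u-\calR_Hu)(t)\|_{\LL}\lesssim H^{r+1}\,\Con[data]^{\,j}\eps^{-j}$. The localization defect $\calR_H^\ell v-\calR_Hv$ is controlled by $\Cell\,\|\nabla v\|_{\LL}$ through Lemma~\ref{lem:CminCl}, so the choice $\ell\gtrsim r|\log H|$ renders it $\lesssim H^r$ as well; cf.~\cite{DonHM23}.

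Next I would set $\phi^n\coloneqq\tul{n}-\calR_H^\ell u(t_n)\in\tUl$ and derive the discrete wave equation it satisfies. Testing \eqref{eq:var_form} in its $\theta$-weighted, second-difference form against $\vH\in\tUl$ and using the Ritz identity $a(\calR_H^\ell u^{n;\theta},\vH)=a(u^{n;\theta},\vH)$ (so the stiffness term is matched exactly and no spatial error enters through it) leads to
\begin{equation*}
\tau^{-2}\big(\phi^{n+1}-2\phi^n+\phi^{n-1},\vH\big)_{\LL}+a\big(\phi^{n;\theta},\vH\big)=\big(G^n,\vH\big)_{\LL},
\end{equation*}
with a consistency functional $G^n$ splitting into a temporal truncation term $R^n$ and the mass-term spatial defect $(1-\calR_H^\ell)\Dt^2u^n$. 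The latter is estimated by the super-approximation above as $\lesssim H^{r}\,\Con[data]^{\,j}\eps^{-j}$ (in fact one order better), since $\Dt^2u^n$ is a time average of $\ddot u\in C([0,T];H^{j+1}(\Omega))$. For $R^n$ I would Taylor-expand $\Dt^2u^n$ and $u^{n;\theta}$ about $t_n$ and repeatedly differentiate the PDE in time to convert every $a(\partial_t^m u,\cdot)$ into an $L^2$ pairing; the leading terms collapse to $R^n=\tau^2(\tfrac{1}{12}-\theta)(\partial_t^4 u(t_n),\cdot)_{\LL}+O(\tau^4)$, so $\|R^n\|_{\LL}\lesssim\tau^s\,\Con[data]^{\,j}\eps^{-j}$ with $s=2$ in general and with the leading term cancelling for $\theta=\tfrac{1}{12}$ to give $s=4$. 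The remainders are controlled by $\partial_t^4 u$, $\partial_t^6 u$, and time derivatives of $f$, which are exactly the quantities made available (and $\eps$-independent for $j=0$) by Assumption~\ref{ass:regularity} and Remark~\ref{rem:regularity}.

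Finally I would invoke \eqref{eq:stability_estimate} for $\phi$, bounding $\|\Dt\phi^{\halfpos}\|_{\LL}+\|\nabla\phi^{\halfpos}\|_{\LL}$ by $\sum_{k=1}^n\tau\|G^k\|_{\LL}$ plus the initial contributions $\|\Dt\phi^{\frac{1}{2}}\|_{\LL}$, $\sqrt{\|\phi^1\|_{\LL}\|\phi^0\|_{\LL}}$, and $\tau\sqrt{\theta}\,\|\nabla\Dt\phi^{\frac{1}{2}}\|_{\LL}$; since $T=N\tau$, the summed consistency data is $\lesssim(H^r+\tau^s)\,\Con[data]^{\,j}\eps^{-j}$. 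The discrete initial conditions $\tul{0},\tul{1}$ must then be chosen through a Taylor-type first step built from $\calR_H^\ell u_0$, $\calR_H^\ell v_0$, and $f$, so that these initial error terms are themselves $\lesssim H^r+\tau^s$, which is where the compatibility conditions (A1)--(A4) enter. A concluding triangle inequality, combining the $\phi$-bound with the projection errors $\|\nabla(\calR_H^\ell u-u)^{\halfpos}\|_{\LL}\lesssim H^r$ and $\|\Dt(\calR_H^\ell u-u)^{\halfpos}\|_{\LL}\lesssim H^{r+1}$, yields \eqref{eq:error}. I expect the main obstacle to be the temporal consistency analysis: obtaining the fourth-order cancellation at $\theta=\tfrac{1}{12}$ while keeping every term expressed through $L^2$ pairings with functions bounded independently of $\eps$ (for $j=0$) requires the full strength of the order-$5$ compatibility in Assumption~\ref{ass:regularity}, and the CFL-dependent constant from Theorem~\ref{thm:stability} must be tracked consistently through the localized setting.
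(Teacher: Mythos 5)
Your architecture is genuinely different from the paper's and, in its core, viable. The paper never forms a Ritz projection of the exact solution into the \emph{localized} space: it routes through the semi-discrete solution $\tilde{z}_H^\ell$ of \eqref{eq:semi_discrete} and its ideal counterpart $\tilde{z}_H$, splitting the error into four parts $\zeta,\eta,\varphi,\rho$ treated in Lemmas~\ref{lem:zeta}, \ref{lem:eta}, \ref{lem:phi}, and~\ref{lem:rho}, whereas you collapse the spatial and localization errors into a single localized Ritz projection $\calR_H^\ell$ and run the discrete stability estimate \eqref{eq:stability_estimate} directly on $\phi^n=\tul{n}-\calR_H^\ell u(t_n)$. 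Your derivation of the error equation (the stiffness term matched exactly by the Ritz property, the consistency term splitting into a temporal truncation part and the mass defect $(1-\calR_H^\ell)\Dt^2u(t_n)$), your projection estimate via the kernel property, self-adjointness of $\PiH$ and the PDE identity $\mathcal{A}u=f-\ddot u$ (which reproduces exactly the mechanism of \eqref{eq:rho_jk} including the three caps in $r$), and the $\theta=\tfrac{1}{12}$ cancellation are all correct. Your route even has the advantage of requiring only the regularity of $u$ from Remark~\ref{rem:regularity}, avoiding the semi-discrete regularity of Remark~\ref{rem:regularity_z}.

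There is, however, a concrete gap at the initial conditions, and it is not cosmetic. The theorem prescribes $\tul{0}=\mathfrak{u}_0=(1-\Cl)\calB_H\PiH u_0$ and $\tul{1}$ via \eqref{eq:initial} built from $\mathfrak{u}_0,\mathfrak{v}_0$, while your stability argument needs $\phi^0$ and $\phi^1$ to be small, i.e., closeness of $\tul{0},\tul{1}$ to $\calR_H^\ell u(t_0),\calR_H^\ell u(t_1)$. You resolve this by replacing the prescribed data with a first step ``built from $\calR_H^\ell u_0$, $\calR_H^\ell v_0$'' --- that proves the estimate for a different method than the one in the statement. In the paper's splitting this issue disappears automatically, since the fully discrete and semi-discrete solutions are started from the same data, so $\zeta^0=0$. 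To repair your version with the prescribed data you must bound $\phi^0=(1-\Cl)\calB_H\PiH u_0-\calR_H^\ell u_0$ (and the analogous contributions to $\phi^1$); the natural route uses the structural identity $(1-\calC)\calB_H\PiH=\tPiH$, valid because both operators map into $\tUH$, both errors lie in $\ker\PiH$, and $\V=\tUH\oplus_a(\ker\PiH\cap\V)$, combined with Lemma~\ref{lem:CminCl} to pass from $\calC$ to $\Cl$ and from $\calR_H$ to $\calR_H^\ell$. None of this appears in your sketch (the paper itself relies on this identity implicitly when asserting $\frakE(0)=0$ in the proof of Lemma~\ref{lem:phi}). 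A second, smaller inaccuracy: your parenthetical claim that the mass defect is ``one order better'' holds only for the \emph{ideal} projection, whose error lies in $\ker\PiH$ and thus enjoys super-approximation; the localization part $(\calR_H-\calR_H^\ell)\Dt^2u(t_n)$ is merely of order $\Cell\approx H^r$ in $L^2$, with no extra factor of $H$. This does not break the proof, since $H^r$ suffices there, but the statement as written is false for $\calR_H^\ell$.
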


Before we prove the theorem, we give some auxiliary definitions. First, we introduce the function~$\tilde{z}_H^\ell$ as the solution to the semi-discrete problem
\begin{equation}\label{eq:semi_discrete}
(\partial_t^2\tilde{z}_H^\ell(t),\tvl)_{\LL}+a(\tilde{z}_H^\ell(t),\tvl)=(f(t),\tvl)_{\LL}
\end{equation}
for all $\tvl\in\tUl$ and $t\in[0,T]$ with initial conditions $\tilde{z}^\ell_H(0)=(1-\Cl)\calB_H\PiH u_0$ and~$\partial_t\tilde{z}^\ell_H(0)=(1-\Cl)\calB_H\PiH v_0$. Analogously to above, we write $\tilde{z}_H=\tilde{z}_H^\infty$ for $\ell=\infty$.

\begin{remark}\label{rem:regularity_z}
Differentiating~\eqref{eq:semi_discrete} with respect to time and using the regularity of the initial conditions (which follows from Assumption~\ref{ass:regularity}), we obtain by standard ODE theory that $\partial_t^4\tilde{z}^\ell_H\in C^2([0,T],\tUl)$ and $\tilde{z}^\ell_H\in C^6([0,T],\tUl)$. Furthermore, by \cite[Ch.~3]{LioM72} we have 
\begin{equation}\label{eq:Cdata}
\begin{aligned}
\sum_{k=0}^{5}\|\tilde{z}_H^\ell\|_{{C^k([0,T];{\V})}}+\|\tilde{z}_H^\ell\|_{{C^6([0,T];{\LL})}}{\lesssim}~\Con[data]^{0}.
\end{aligned}
\end{equation}
\end{remark}

Next, we define the operator $\tPiH$ as the orthogonal projection onto the global multiscale space $\tUH$ with respect to the bilinear form $a$, i.e., for $v\in\V$ we define $\tPiH v$ by
\begin{equation}\label{eq:tPi}
	a(\tPiH v,\tvH)=a(v,\tvH)
\end{equation}
for all $\tvH\in\tUH$. Let $u$ be the solution to the variational formulation \eqref{eq:var_form} with regularity properties as stated in Remark \ref{rem:regularity} and $\tul{n}\in\tUl$ for $\ell\in\N$ and $\tuH{n}\in\tUH$ for $\ell=\infty$ be the solutions to the $p$-LOD-$\theta$ method defined in \eqref{eq:splodth} for $n=0, \dots, N$ with initial conditions defined analogously to~\cite{Kar11} by $\tul{0}=\mathfrak{u}_0$ and $\tul{1}$ is obtained from
\begin{multline}\label{eq:initial}
	(\tul{1}-\tul{0},\tvl)_{\LL}+\tau^2\theta a(\tul{1}-\tul{0},\tvl)=\\
	\begin{aligned} &\tau(\mathfrak{v}_0,\tvl)_{\LL}-\tfrac{\tau^2}{2}a(\mathfrak{u}_0,\tvl)_{\LL}-\tfrac{\tau^3}{12}a(\mathfrak{v}_0,\tvl)_{\LL}
	+\tfrac{\tau^2}{2}(f(0),\tvl)_{\LL}\\
	&+\tfrac{\tau^3}{6}(\partial_tf(0),\tvl)_{\LL}+\tfrac{\tau^4}{24}(\partial_t^2f(0),\tvl)_{\LL}
	\end{aligned}
\end{multline}
for all $\tvl\in\tUl$, where $\mathfrak{u}_0=(1-\Cl)\calB_H\PiH u_0$ and $\mathfrak{v}_0=(1-\Cl)\calB_H\PiH v_0$. We note that this initial condition is chosen such that it represents the fourth-order Taylor polynomial of the semi-discrete solution at the initial time. 

The following lemma provides an estimate for the time discretization error. We make use of the stability estimate \eqref{eq:stability_estimate} to derive an error estimate and motivate the choice for the initial conditions \eqref{eq:initial}. The ideas in the proof follow~\cite{Kar11} with some technicalities related to the $p$-LOD method.

\begin{lemma}[Temporal error]\label{lem:zeta}
Let $\{\tul{n}\}_{n=0}^N$ and $\tilde{z}_H^\ell$ be the solutions to~\eqref{eq:splodth} and~\eqref{eq:semi_discrete}, respectively, and let $\zeta^n=\tul{n}-\tilde{z}_H^\ell(t_n)$ with $t_n=n\tau$.Further, suppose that Assumption~\ref{ass:regularity} holds. Then there exists a constant $\Con[sd]>0$, dependent on $T$, $\theta$, $\delta$, $\alpha$ and $\beta$ such that
\begin{equation}\label{eq:zeta}
\begin{aligned}
&\big\|\Dt\zeta^{\halfpos}\big\|_{\LL}+\big\|\nabla\zeta^{n+\frac{1}{2}}\big\|_{\LL}
\leq\tau^s\,\Con[sd]\Con[data]^{\,0},
\end{aligned}
\end{equation}
where $s=2$ if $\theta\neq\frac{1}{12}$ and $s=4$ if $\theta=\frac{1}{12}$.
\end{lemma}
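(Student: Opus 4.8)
The plan is to follow the standard consistency--stability route, exploiting that \eqref{eq:stability_estimate} was derived for the solution of \eqref{eq:splodth} with an \emph{arbitrary} right-hand side and arbitrary starting values, since the scheme is linear and the energy identity in Lemma~\ref{lem:nrg} holds for any source. First I would insert the semi-discrete solution at the grid points into \eqref{eq:splodth} and define the consistency defect $\rho^n\in\tUl$ by
\[
(\rho^n,\tvl)_{\LL}=\big(\Dt^2\tzl{n},\tvl\big)_{\LL}+a\big(\theta\tzl{n+1}+(1-2\theta)\tzl{n}+\theta\tzl{n-1},\tvl\big)-\big(\fth,\tvl\big)_{\LL}
\]
for all $\tvl\in\tUl$. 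Subtracting this from \eqref{eq:splodth} shows that $\zeta^n=\tul{n}-\tzl{n}$ solves exactly the $p$-LOD-$\theta$ scheme with source $-\rho^n$, so that applying \eqref{eq:stability_estimate} to $\zeta$ (which holds unconditionally for $\theta\geq\tfrac14$ and under the assumed CFL condition \eqref{eq:CFL} otherwise) gives a bound of the form $\|\Dt\zeta^{\halfpos}\|_{\LL}+\|\nabla\zeta^{\halfpos}\|_{\LL}\lesssim\|\Dt\zeta^{\frac12}\|_{\LL}+\sqrt{\|\zeta^1\|_{\LL}\|\zeta^0\|_{\LL}}+\tau\sqrt\theta\|\nabla\Dt\zeta^{\frac12}\|_{\LL}+\sum_{k=1}^n\tau\|\rho^k\|_{\LL}$. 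Since the initial conditions are chosen with $\tul{0}=\mathfrak{u}_0=\tzl{0}$, we have $\zeta^0=0$ and the middle term drops. It therefore remains to bound the consistency defect and the two surviving initial contributions.

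For the consistency error I would Taylor expand each discrete operator about $t_n$, using $\tzl{}\in C^6([0,T];\tUl)$ from Remark~\ref{rem:regularity_z}: the central difference yields $\Dt^2\tzl{n}=\partial_t^2\tzl{n}+\tfrac{\tau^2}{12}\partial_t^4\tzl{n}+O(\tau^4)$, the $\theta$-average yields $a(\theta\tzl{n+1}+(1-2\theta)\tzl{n}+\theta\tzl{n-1}-\tzl{n},\tvl)=\theta\tau^2 a(\partial_t^2\tzl{n},\tvl)+O(\tau^4)$, and $\fth-f(t_n)=\theta\tau^2\partial_t^2 f(t_n)+O(\tau^4)$. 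After cancelling the semi-discrete identity $(\partial_t^2\tzl{n},\tvl)_{\LL}+a(\tzl{n},\tvl)=(f(t_n),\tvl)_{\LL}$ the leading order is $\tau^2\big[\tfrac{1}{12}(\partial_t^4\tzl{n},\tvl)_{\LL}+\theta\,a(\partial_t^2\tzl{n},\tvl)-\theta(\partial_t^2 f(t_n),\tvl)_{\LL}\big]$. The decisive manipulation is to differentiate \eqref{eq:semi_discrete} $m$ times in time, which gives the identity $a(\partial_t^m\tzl{},\tvl)=(\partial_t^m f-\partial_t^{m+2}\tzl{},\tvl)_{\LL}$ valid on $\tUl$. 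With $m=2$ this both turns the bilinear-form contribution into an $L^2$-pairing (so that $\|\rho^n\|_{\LL}$ is controlled without any negative power of $H$) and collapses the leading term to $\tau^2(\tfrac{1}{12}-\theta)(\partial_t^4\tzl{n},\tvl)_{\LL}$. This is precisely the cancellation governing the dichotomy: for $\theta\neq\tfrac1{12}$ it gives $\|\rho^n\|_{\LL}\lesssim\tau^2\|\partial_t^4\tzl{}\|_{\LL}$, whereas for $\theta=\tfrac1{12}$ it vanishes and one carries the expansion one order further, using the $O(\tau^4)$ term of $\Dt^2$ together with the identity for $m=4$, to obtain $\|\rho^n\|_{\LL}\lesssim\tau^4\|\partial_t^6\tzl{}\|_{\LL}$. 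In either case \eqref{eq:Cdata} bounds the right-hand side by $\tau^s\Con[data]^{\,0}$, and summing over $k\leq n\leq N=T/\tau$ produces $\sum_{k=1}^n\tau\|\rho^k\|_{\LL}\lesssim T\,\tau^s\Con[data]^{\,0}$.

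For the initial contributions I would compare $\tul{1}$, defined by \eqref{eq:initial}, with $\tzl{1}$. Evaluating the left-hand side of \eqref{eq:initial} at the fourth-order Taylor polynomial of $\tzl{}$ at $t=0$ and eliminating the time derivatives $\partial_t^m\tzl{0}$ through the differentiated forms of \eqref{eq:semi_discrete} at $t=0$, one verifies that the coefficients in \eqref{eq:initial} are arranged so that the residual tested against $\tzl{1}$ is of order $\tau^{s+1}$ (for $\theta=\tfrac1{12}$ the $O(\tau^3)$ and $O(\tau^4)$ terms cancel identically, leaving order $\tau^5$). Here the same ODE-differentiation trick is essential: every remaining $a(\partial_t^m\tzl{0},\cdot)$ term becomes an $L^2$-pairing $(\partial_t^m f(0)-\partial_t^{m+2}\tzl{0},\cdot)_{\LL}$, bounded by $\Con[data]^{\,0}$. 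Testing the resulting equation $(\zeta^1,\tvl)_{\LL}+\tau^2\theta\, a(\zeta^1,\tvl)=O(\tau^{s+1})$ with $\tvl=\zeta^1$ and using coercivity then yields $\|\zeta^1\|_{\LL}\lesssim\tau^{s+1}\Con[data]^{\,0}$ and $\sqrt\theta\,\|\nabla\zeta^1\|_{\LL}\lesssim\tau^{s}\Con[data]^{\,0}$, whence $\|\Dt\zeta^{\frac12}\|_{\LL}=\tau^{-1}\|\zeta^1\|_{\LL}\lesssim\tau^s\Con[data]^{\,0}$ and $\tau\sqrt\theta\|\nabla\Dt\zeta^{\frac12}\|_{\LL}\lesssim\tau^s\Con[data]^{\,0}$. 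Collecting all contributions into a single constant $\Con[sd]$ depending on $T,\theta,\delta,\alpha,\beta$ yields \eqref{eq:zeta}.

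The hard part is the bookkeeping in the two Taylor expansions, and in particular recognizing that \emph{every} bilinear-form defect must be rewritten via the differentiated semi-discrete equation before it is estimated: without this step the $a(\partial_t^m\tzl{},\cdot)$ terms are only controllable in the $H^1$-seminorm, which would force an inverse estimate on $\tUl$ and the loss of one power of $\tau$ through $H^{-1}$ and the CFL bound $\tau\lesssim H$, degrading $s=4$ to $s=3$ in the borderline case $\theta=\tfrac1{12}$. One must also check that the genuinely remaining bilinear remainders are of order $\tau^{s+2}$ in the dual norm and hence subdominant even after such a conversion, and verify the exact arithmetic cancellations (the factor $\tfrac{1}{12}-\theta$ in the consistency term and the vanishing of the $O(\tau^3)$ and $O(\tau^4)$ initial residuals at $\theta=\tfrac1{12}$), which is the most delicate computation.
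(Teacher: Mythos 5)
Your proposal is correct and takes essentially the same approach as the paper: an error equation for $\zeta^n$, the stability estimate \eqref{eq:stability_estimate}, a Taylor-expansion bound for the consistency sum, and the subtraction argument at $t_1$ based on \eqref{eq:initial} with test function $\tau^{-1}(\zeta^1-\zeta^0)$. The only presentational difference is that the paper cancels all bilinear-form contributions at once by taking the $\theta$-weighted combination of the semi-discrete equation \eqref{eq:semi_discrete} at the three levels $t_{n+1},t_n,t_{n-1}$, so that the defect is directly $\partial_t^2\tilde{z}_H^{n,\ell;\theta}-\Dt^2\tilde{z}_H^\ell(t_n)$ measured in $\LL$, whereas you achieve the same cancellation through the time-differentiated identity $a(\partial_t^m\tilde{z}_H^\ell,\cdot)=(\partial_t^mf-\partial_t^{m+2}\tilde{z}_H^\ell,\cdot)_{\LL}$; both routes produce the same leading defect $(\theta-\tfrac{1}{12})\tau^2\,\partial_t^4\tilde{z}_H^\ell(t_n)+\mathcal{O}(\tau^4)$ and hence the same dichotomy in $s$.
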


\begin{proof} The sequence $\{\zeta^n\}_{n=0}^N$ solves 
\begin{equation*}
\begin{aligned}
\big(\Dt^2\zeta^{n},\tvl\big)_{\LL}+a\big(\zeta^{n;\theta},\tvl\big)=&\big(\partial_t^2\tilde{z}_H^{n,\ell;\theta},\tvl\big)_{\LL}-\big(\Dt^2\tilde{z}_H^\ell(t_{n}),\tvl\big)_{\LL}
\end{aligned}
\end{equation*}
for all $\tvl\in\tUl$ and $n=1,\cdots, N-1$, where $\tilde{z}_H^{n,\ell;\theta}$ is once again defined as in~\eqref{eq:theta_difference}. Moreover,  $\tilde{z}_H^{n,\ell}=\tilde{z}_H^\ell(t_n)$.
With the stability estimate~\eqref{eq:stability_estimate}, we therefore obtain
\begin{equation}\label{eq:zeta_stability}
\begin{aligned}
\big\|\Dt\zeta^{\halfpos}&\big\|_{\LL}+\big\|\nabla \zeta^{n+\frac{1}{2}}\big\|_{\LL}\\
&\leq \Con[s]\Big(\big\|\Dt\zeta^{\frac{1}{2}}\big\|_{\LL}+\sqrt{\| \zeta^{1}\|_{\LL}\|\zeta^{0}\|_{\LL}}+\tau\sqrt{\theta}\big\|\nabla \Dt\zeta^{\frac{1}{2}}\big\|_{\LL}\\
&\qquad\qquad+\sum_{k=1}^n\tau\big\|\partial_{t}^2\tilde{z}_H^{k,\ell;\theta}-\Dt^2\tilde{z}_H^\ell(t_{k})\big\|_{\LL}\Big).
\end{aligned}
\end{equation}
For the sum in the last row of~\eqref{eq:zeta_stability}, we obtain with a Taylor expansion the estimate
\begin{align}\label{eq:zeta_sum}
\sum_{k=1}^n\tau\big\|\partial_{t}^2\tilde{z}_H^{k,\ell;\theta}-\Dt^2\tilde{z}_H^\ell(t_{k})\big\|_{\LL}\lesssim \tau^s\,T\, \|\tilde{z}^\ell_H\|_{C^{s+2}([0,T];\LL)},
\end{align}
where $s=2$ if $\theta\neq\frac{1}{12}$ and $s=4$ if $\theta=\frac{1}{12}$. 
Since $\zeta^0=0$, the second term on the right-hand side of~\eqref{eq:zeta_stability} vanishes. It remains to bound the two other terms on the right-hand side. 
Subtracting the equation
\begin{multline*}
	\big(\tilde{z}_H^\ell(t_1)-\tilde{z}_H^\ell(t_0),\tvl\big)_{\LL}+\tau^2\theta a\big(\tilde{z}_H^\ell(t_1)-\tilde{z}_H^\ell(t_0),\tvl\big)\\
	=\big(\tilde{z}_H^\ell(t_1)-\tilde{z}_H^\ell(t_0),\tvl\big)_{\LL}-\tau^2\theta\big(\partial_t^2(\tilde{z}_H^\ell(t_1)-\tilde{z}_H^\ell(t_0)),\tvl\big)_{\LL}+\tau^2\theta\big(f(t_1)-f(t_0), \tvl\big)_{\LL}
\end{multline*}
from \eqref{eq:initial} and using the Taylor expansions of $f$ and $\tilde{z}^\ell_H$, we obtain
\begin{multline*}
	\big(\zeta^1-\zeta^0,\tvl\big)_{\LL}+\theta\tau^2a(\zeta^1-\zeta^0,\tvl\big)_{\LL}\\
	\begin{aligned}
		&=(\theta-\tfrac{1}{12})\tau^3\big[\big(\partial_t^3\tzl{0},\tvl\big)_{\LL}-\big(\partial_tf(t_0),\tvl\big)_{\LL}\big]\\
		&\quad+\tfrac{1}{2}(\theta-\tfrac{1}{12})\tau^4\big[\big(\partial_t^4\tzl{0},\tvl\big)_{\LL}-\big(\partial_t^2f(t_0),\tvl\big)_{\LL}\big]+\mathcal{O}(\tau^5).
	\end{aligned}
\end{multline*}
Choosing the test function $\tvl=\frac{\zeta^1-\zeta^0}{\tau}$ and dividing by $\tau$, we estimate using Young's inequality
\begin{multline*}
\|\Dt\zeta^{\frac{1}{2}}\|^2_{\LL}+\tau^2\theta\|\nabla \Dt\zeta^{\frac{1}{2}}\|^2_{\LL}\\
\lesssim \tau^s \big(\|\tilde{z}^\ell_H\|_{C^5([0,T];\LL)}+\|f\|_{C^3([0,T];\LL)}\big)\big(\|D_{\tau}\zeta^1\|_{\LL}+\tau\sqrt{\theta}\|\nabla D_\tau\zeta^1\|_{\LL}\big),
\end{multline*}
where $s=2$ if $\theta\neq\frac{1}{12}$ and $s=4$ if $\theta=\frac{1}{12}$. 
This yields
\begin{equation*}
\|D_{\tau}\zeta^1\|_{\LL}+\tau\sqrt{\theta}\|\nabla D_{\tau}\zeta^1\|_{\LL}\lesssim 2\tau^s\big(\|\tilde{z}^\ell_H\|_{C^{s+1}([0,T];\LL)}+\|f\|_{C^{s-1}([0,T];\LL)}\big),
\end{equation*}
and combined with~\eqref{eq:zeta_stability} and~\eqref{eq:zeta_sum} we obtain
\begin{multline*}
\|D_{\tau}\zeta^{n+1}\|_{\LL}+\|\nabla\zeta^{n+\frac{1}{2}}\|_{\LL}\\
\leq \tau^s\Con[sd]\big(\|\tilde{z}^\ell_H\|_{C^{s+2}([0,T];\LL)}+\|\tilde{z}^\ell_H\|_{C^{s+1}([0,T];\LL)}+\|f\|_{C^{s-1}([0,T];\LL)}\big)
\end{multline*}
for some $\Con[sd] > 0$. 
Note that the constant $\Con[sd]$ scales linearly with the final time $T$. Finally, \eqref{eq:zeta}~follows with Remark~\ref{rem:regularity_z}.
\end{proof}

\begin{remark}[Initial conditions]
If $\theta\neq\frac{1}{12}$, the convergence rate in time in Lemma~\ref{lem:zeta} is only $s=2$ and the initial condition $\tul{1}$ do not need to be of fourth order. Therefore, they can be computed by the reduced equation
\begin{equation*}
\begin{aligned}
(\tul{1}-\tul{0},\tvl)+\tau^2\theta a(\tul{1}-\tul{0},\tvl)=&\ \tau(v^0,\tvl)-\tfrac{\tau^2}{2}a(u^0,\tvl)+\tfrac{\tau^2}{2}(f(0),\tvl),
\end{aligned}
\end{equation*}
where $\tul{0}$, $u^0$ and $v^0$ are chosen as above; cf.~\cite{Kar11}. Note that in this case we require less regularity assumptions on $\tilde{z}^\ell_H$ and $f$.
\end{remark}

The following lemma quantifies another ingredient of the total error, namely the localization error that investigates the difference of using the correction operator~$\Cl$ or the global operator~$\calC$. The proof follows ideas presented in~\cite{GeeM23} adjusted to the higher-order setting.

\begin{lemma}[Localization error]\label{lem:eta}
Let $\tilde{z}_H^\ell$ and $\tilde{z}_H$ be solutions to~\eqref{eq:semi_discrete} with localization parameters~$\ell\in\N$ and $\ell=\infty$, respectively, and let $\eta(t)=\tzH{}-\tilde{z}_H^\ell(t)$. Further, suppose that Assumption~\ref{ass:regularity} holds. Then there exists a constant $\Con[le]>0$ dependent on $T$, the polynomial degree~$p$, $\alpha$, and $\beta$ such that
\begin{equation}\label{eq:eta}
\big\|\partial_t\eta(t)\big\|_{\LL}+\big\|\nabla\eta(t)\big\|_{\LL}\leq  \Con[le]\Cell\Con[data]^0.
\end{equation}
\end{lemma}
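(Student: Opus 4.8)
The plan is to compare the two semi-discrete solutions by first re-localizing $\tilde z_H$ into the space $\tUl$ and then running an energy argument on the remaining discrepancy. Since $\tilde z_H(t)\in\tUH=(1-\calC)\calB_H\VH$, I would write $\tilde z_H=(1-\calC)w_H$ with $w_H:=\calB_H\PiH\tilde z_H$ (using $\PiH\calB_H=\mathrm{id}$ on $\VH$), and introduce the intermediate function $\hat z^\ell:=(1-\Cl)w_H\in\tUl$. Then $\eta=\sigma-\phi$ with $\phi:=\CCl w_H$ and $\sigma:=\hat z^\ell-\tilde z_H^\ell\in\tUl$. The part $\phi$ lies in $\ker\PiH$ and is controlled directly by Lemma~\ref{lem:CminCl}, namely $\|\nabla\phi\|_{\LL}\le\Cell\|\nabla w_H\|_{\LL}$, and, since elements of $\ker\PiH$ obey a Poincaré estimate with constant of order $H$, also $\|\partial_t\phi\|_{\LL}\lesssim H\,\Cell\|\nabla\partial_t w_H\|_{\LL}$. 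Invoking the standard $H^1$-stability of the piecewise $L^2$-projection $\PiH$ (which together with \eqref{eq:Cb} gives $\|\nabla\calB_H\PiH v\|_{\LL}\lesssim\|\nabla v\|_{\LL}$) and the regularity bound \eqref{eq:Cdata}, every norm of $w_H$ and its time derivatives is bounded by $\Con[data]^{\,0}$, so $\phi$ already satisfies the asserted estimate.

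It remains to control $\sigma$. Inserting $\hat z^\ell$ into the localized equation \eqref{eq:semi_discrete} and subtracting the equation for $\tilde z_H^\ell$ shows that $\sigma$ solves a semi-discrete wave problem in $\tUl$ whose right-hand side is a consistency defect $R$. The key simplification comes from the $a$-orthogonality of ideally corrected functions to $W:=\ker\PiH\cap\V$: writing a test function $\tvl=(1-\Cl)v_H\in\tUl$ and its global counterpart $\tvH=(1-\calC)v_H\in\tUH$ with $\psi:=\CCl v_H\in W$, the stiffness contributions $a(\tilde z_H,\psi)$ and $a(\phi,\tvH)$ vanish because $\psi,\phi\in W$ while $\tilde z_H,\tvH\perp_a W$. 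Using the defining equation for $\tilde z_H$ tested with $\tvH\in\tUH$ to cancel the surviving consistency piece, the defect collapses to $R(\tvl)=(\partial_t^2\tilde z_H-f,\psi)_{\LL}+(\partial_t^2\phi,\tvl)_{\LL}+a(\phi,\psi)$. Every term now carries a factor $\CCl$ and is therefore of order $\Cell$: the first by Lemma~\ref{lem:CminCl} applied to $\psi$ (and \eqref{eq:Cpr} to gain an extra power of $H$ on $\partial_t^2\tilde z_H-f$), the second through $\|\partial_t^2\phi\|_{\LL}$, and the last even of order $\Cell^2$.

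For the energy estimate I would test the $\sigma$-equation with $\tvl=\partial_t\sigma$ and integrate the identity $\tfrac{\mathrm d}{\mathrm dt}\tfrac12\big(\|\partial_t\sigma\|_{\LL}^2+a(\sigma,\sigma)\big)=R(\partial_t\sigma)$ over $[0,t]$. A short computation with the initial conditions from \eqref{eq:initial} and the semi-discrete data gives $\sigma(0)=0$ and $\partial_t\sigma(0)=0$, so no initial contribution appears. The delicate point is that $R$ depends on the test function through $\psi$, which for $\tvl=\partial_t\sigma$ reads $\psi=\CCl\calB_H\PiH\partial_t\sigma=\partial_t(\CCl\calB_H\PiH\sigma)$ because the correction operators are time-independent. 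Integrating the term $(\partial_t^2\tilde z_H-f,\psi)_{\LL}$ by parts in time moves the derivative off $\sigma$ and produces a boundary term of order $\Cell\|\nabla\sigma(t)\|_{\LL}$, absorbed into $a(\sigma,\sigma)$ via Young's inequality, together with an integral term of order $\int_0^t\Cell\,\|\partial_t^3\tilde z_H-\partial_t f\|_{\LL}\,\|\nabla\sigma\|_{\LL}\,\mathrm ds$ handled by Grönwall's inequality; the remaining $(\partial_t^2\phi,\partial_t\sigma)_{\LL}$ and $a$-contributions are treated analogously, the latter being of order $\Cell^2$. This yields $\|\partial_t\sigma(t)\|_{\LL}+\|\nabla\sigma(t)\|_{\LL}\lesssim\Cell\,\Con[data]^{\,0}$ with a constant growing with $T$, $p$, $\alpha$, and $\beta$, and combining with the bound on $\phi$ gives \eqref{eq:eta}.

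I expect the main obstacle to be precisely this energy argument. Because the localized spaces $\tUl$ and $\tUH$ do not coincide, the defect $R$ cannot be written as a fixed source but depends genuinely on the test function, so the stability estimate cannot be applied verbatim; one must exploit the time-independence of the correction operators to integrate $R(\partial_t\sigma)$ by parts in time and convert it into quantities that Grönwall's inequality can absorb. The two ingredients that make this work are the $a$-orthogonality cancellations, which remove all leading-order stiffness terms, and the $H^1$-stability of $\calB_H\PiH$, which together guarantee that every surviving term inherits the exponentially decaying factor $\Cell$.
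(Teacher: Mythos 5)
Your proposal is correct and follows essentially the same route as the paper's proof: the same splitting $\eta=(1-\Cl)\big(z_H-z_H^\ell\big)+(\calC-\Cl)z_H$ with $z_H=\calB_H\PiH\tilde z_H$, the identical defect equation (your $R(\tvl)$ is exactly the right-hand side of the paper's~\eqref{eq:eta_localization}), and the same energy argument with integration by parts in time, concluded via Lemma~\ref{lem:CminCl}, \eqref{eq:Cpr}, \eqref{eq:Cb}, and Remark~\ref{rem:regularity_z}. The only cosmetic differences are that the paper closes the estimate through the quadratic inequality $\sup_t E(t)\le C\,(\sup_t E(t))^{1/2}$ rather than Young plus Gr\"onwall, and that your citation of~\eqref{eq:initial} for the initial data should instead refer to the initial conditions of the semi-discrete problem~\eqref{eq:semi_discrete}, which indeed yield $\sigma(0)=\partial_t\sigma(0)=0$.
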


\begin{proof}
Let $z_H(t_{})=\calB_H\PiH\tzH{}$ and  $z_H^\ell(t)=\calB_H\PiH\tilde{z}_H^\ell(t)$. 
Since we are comparing solutions in the different spaces $\tzl{}\in\tUl$ and $\tzH{}\in\tUH$, we first use that
\begin{equation}\label{eq:CsolClsol}
	\begin{aligned}
		\| \partial_t\tzH{} - \partial_t\tzl{} \| &= \|(1-\calC)\partial_t z_H-(1-\Cl)\partial_t z_H^\ell\|_{\LL}\\&\leq \|(1-\Cl)\partial_t(z_H-z_H^\ell)\|_{\LL}+\|(\calC-\Cl)\partial_tz_H\|_{\LL},\\
		\|\nabla( \tzH{} - \tzl{} )\| &= \|\nabla((1-\calC)z_H-(1-\Cl)z_H^\ell)\|_{\LL}\\&\leq \|\nabla(1-\Cl)(z_H-z_H^\ell)\|_{\LL}+\|\nabla(\calC-\Cl)z_H\|_{\LL}.
	\end{aligned}
\end{equation}
The last terms on the right-hand sides of both inequalities will be estimated with Lemma~\ref{lem:CminCl} (after applying the Poincar\'e inequality for the last term in the first inequality). For the remaining terms, we set $\eH{} = z_H(t_{})-z^\ell_H(t) \in U_H$ and
\begin{equation*} 
	\tel{}=(1-\Cl)\eH{}=(1-\Cl)(z_H(t_{})-z^\ell_H(t)).
\end{equation*} 
The first terms on the right-hand side of~\eqref{eq:CsolClsol} then read
\begin{equation}
	\begin{aligned}\label{eq:teHl}
		\|(1-\Cl)\partial_t(z_H(t_{})-z^\ell_H(t))\|_{\LL}&=\|\partial_t\tel{}\|_{\LL},\\
		\|\nabla(1-\Cl)(z_H(t_{})-z^\ell_H(t))\|_{\LL}&=\|\nabla \tel{}\|_{\LL}.
	\end{aligned}
\end{equation}
To bound these norms, we observe that the function $\eH{}$ solves 
\begin{multline}\label{eq:eta_localization}
	((1-\Cl)\partial_t^2\eH{},(1-\Cl)\vH)+a((1-\Cl)\eH{},(1-\Cl)\vH)\\
	\begin{aligned}
		&=-(f(t),(\calC-\Cl)\vH)
		+((1-\calC)\partial_t^2 z_H(t),(\calC-\Cl)\vH)\\
		&\quad +((\calC-\Cl)\partial_t^2  z_H(t),(1-\Cl)\vH)
		+a((\calC-\Cl)z_H(t),(\calC-\Cl)\vH)
	\end{aligned}
\end{multline}
for all $\vH\in U_H$. 
We define an appropriate energy by
\begin{equation*}
E(t)\coloneqq \frac{1}{2}\big[\|(1-\Cl)\partial_t \eH{}\|_{\LL}^2+a\big((1-\Cl)\eH{},(1-\Cl)\eH{}\big)\big]
\end{equation*}
for all $t\in[0,T]$.
We now choose $v_H=\partial_t \eH{}$ as a test function in equation \eqref{eq:eta_localization} and integrate in time. Using~\eqref{eq:Cb}, \eqref{eq:Cpr}, \eqref{eq:Pi_stability}, \eqref{eq:CminCl}, and the fact that $e_H^\ell(0)=0$, we obtain after taking the supremum over $t\in[0,T]$ 
\begin{equation}\label{eq:eta_nrg}
\begin{aligned}
\operatorname{sup}_{t\in[0, T]}E(t)&\leq \big[H\Con[pr]\Con[b]\big(\|f\|_{C([0,T],\LL)}+T\|\partial_tf\|_{C([0,T],\LL)}\big)\\
&\qquad +H\Con[pr]\Con[b]\big(\|\partial_t^2\tilde{z}_H\|_{C([0,T],\LL)}+T\|\partial_t^3\tilde{z}_H\|_{C([0,T],\LL)}\big)\\
&\qquad +H^2\Con[pr]^2\Con[b]\big(\|\nabla\partial_t^2\tilde{z}_H\|_{C([0,T],\LL)}+T\|\nabla\partial_t^3\tilde{z}_H\|_{C([0,T],\LL)}\big)\\
&\qquad +\beta\Con[b]^2\Cell\big(\|\nabla \tilde{z}_H\|_{C([0,T],\LL)}+T\|\nabla\partial_t\tilde{z}_H\|_{C([0,T],\LL)}\big)\big]\\
&\quad\cdot\Cell\sqrt{\tfrac{2}{\alpha}} \big(\operatorname{sup}_{t\in[0, T]}E(t)\big)^{\frac{1}{2}}.
\end{aligned}
\end{equation}
With \eqref{eq:eta_nrg}, we can now bound the terms in~\eqref{eq:teHl}. Going back to~\eqref{eq:CsolClsol}, we finally obtain~\eqref{eq:eta} with~Lemma~\ref{lem:CminCl} and Remark~\ref{rem:regularity_z}.
\end{proof}

\begin{remark}\label{rem:loc}
If we choose the localization parameter $\ell\gtrsim r|\log H|$, then the error estimate~\eqref{eq:eta} reduces to
\begin{equation*}
	\big\|\partial_t\eta(t)\big\|_{\LL}+\big\|\nabla\eta(t)\big\|_{\LL}\lesssim H^{r}\,\Con[data]^0,
\end{equation*}
which matches the spatial error estimate as investigated below.
\end{remark}

To bound the spatial error, we split the error into two parts. The first part describes the error between the semi-discrete solution and the projection of the exact solution into the multiscale space $\tUl$. The second part of the spatial error is the error between the exact solution and its projection. We bound the two terms in the following two lemmas. 

\begin{lemma}[Spatial discretization error]\label{lem:phi}
Let $\tilde{z}_H$ and $u$ be the solutions to~\eqref{eq:semi_discrete} and~\eqref{eq:var_form}, respectively, and let $\tilde{\calP}$ be the projection defined in~\eqref{eq:tPi}. Further, let $\varphi(t)=\tzH{}-\tPiH u(t)$ and~$\rho(t)=u(t)-\tPiH u(t)$. If Assumption~\ref{ass:regularity} is fulfilled, then there exists a constant $\Con[sdp]>0$ (dependent on~$\alpha$) such that
\begin{equation}\label{eq:phi}
\|\partial_t\varphi(t)\|_{\LL}+\|\nabla\varphi(t)\|_{\LL}\leq\Con[sdp]\int\limits_0^t\|\partial_t^2\rho(s)\|_{\LL}\ds .
\end{equation}
\end{lemma}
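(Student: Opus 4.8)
The plan is to use the classical elliptic-projection energy argument for the semi-discrete wave equation, adapted to the multiscale setting. First I would note that since both $\tilde{z}_H(t)$ and $\tPiH u(t)$ lie in the global multiscale space $\tUH$, the difference $\varphi(t)=\tilde{z}_H(t)-\tPiH u(t)$ is itself an admissible test function in $\tUH\subset\V$. The central step is to derive the error equation
\begin{equation*}
(\partial_t^2\varphi,\tvH)_{\LL}+a(\varphi,\tvH)=(\partial_t^2\rho,\tvH)_{\LL}\qquad\text{for all }\tvH\in\tUH.
\end{equation*}
To obtain it, I would subtract $(\partial_t^2\tPiH u,\tvH)_{\LL}+a(\tPiH u,\tvH)$ from the semi-discrete equation~\eqref{eq:semi_discrete} (with $\ell=\infty$). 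On the left this produces the two terms above. On the right, the defining Galerkin property~\eqref{eq:tPi} of $\tPiH$ gives $a(\tPiH u,\tvH)=a(u,\tvH)$, and testing the continuous variational formulation~\eqref{eq:var_form} with $\tvH\in\V$ yields $a(u,\tvH)=(f,\tvH)_{\LL}-(\partial_t^2u,\tvH)_{\LL}$ (using $\partial_t^2u\in\LL$, which holds under Assumption~\ref{ass:regularity}). The source terms $(f,\tvH)_{\LL}$ cancel, leaving exactly $(\partial_t^2u-\partial_t^2\tPiH u,\tvH)_{\LL}=(\partial_t^2\rho,\tvH)_{\LL}$ on the right.

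Next I would verify that the error has vanishing initial data, $\varphi(0)=0$ and $\partial_t\varphi(0)=0$, so that the associated energy starts at zero. This amounts to showing that the prescribed initial values of the semi-discrete problem coincide with the elliptic projection of the continuous ones, i.e.\ $(1-\calC)\calB_H\PiH u_0=\tPiH u_0$ and likewise for $v_0$. I would establish this by checking that $u_0-(1-\calC)\calB_H\PiH u_0$ lies in the kernel space $\ker\PiH\cap\V$: indeed $\PiH\calC=0$ (as $\calC$ maps into $\ker\PiH$) and $\PiH\calB_H=\mathrm{id}$ on $\VH$, so $\PiH(1-\calC)\calB_H\PiH u_0=\PiH u_0$. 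Since the splitting $\V=\tUH\oplus(\ker\PiH\cap\V)$ is $a$-orthogonal---because $a((1-\calC)v,w)=a(v,w)-a(\calC v,w)=0$ for $w\in\ker\PiH\cap\V$ by the definition of $\calC$---this identifies $(1-\calC)\calB_H\PiH u_0$ as the $a$-orthogonal projection $\tPiH u_0$, and analogously $\partial_t\tilde{z}_H(0)=\tPiH v_0=\tPiH\partial_t u(0)$.

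With the error equation and zero initial data in hand, I would run the standard energy estimate: testing with $\tvH=\partial_t\varphi$ turns the left-hand side into $\tfrac{1}{2}\tfrac{d}{dt}\big[\|\partial_t\varphi\|_{\LL}^2+a(\varphi,\varphi)\big]=\tfrac{d}{dt}E_\varphi(t)$, and the right-hand side is bounded by $\|\partial_t^2\rho\|_{\LL}\|\partial_t\varphi\|_{\LL}\le\|\partial_t^2\rho\|_{\LL}\sqrt{2E_\varphi}$. Integrating from $0$ to $t$ with $E_\varphi(0)=0$ and setting $G=\sup_{[0,t]}\sqrt{E_\varphi}$ gives $G^2\le\sqrt{2}\,G\int_0^t\|\partial_t^2\rho(s)\|_{\LL}\ds$, hence $\sqrt{E_\varphi(t)}\le\sqrt{2}\int_0^t\|\partial_t^2\rho(s)\|_{\LL}\ds$; this formulation sidesteps the division-by-zero issue of the usual $\tfrac{d}{dt}\sqrt{E_\varphi}$ argument. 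Finally, coercivity $a(\varphi,\varphi)\ge\alpha\|\nabla\varphi\|_{\LL}^2$ converts the energy bound into $\|\partial_t\varphi\|_{\LL}+\|\nabla\varphi\|_{\LL}\le(\sqrt{2}+\sqrt{2/\alpha})\sqrt{E_\varphi}$, yielding~\eqref{eq:phi} with $\Con[sdp]$ depending only on $\alpha$. I expect the main obstacle to be the bookkeeping in the first two steps---getting the source and $a(u,\cdot)$ terms to cancel cleanly in the error equation, and rigorously identifying the corrected initial data with the elliptic projection $\tPiH$---whereas the closing energy argument is routine.
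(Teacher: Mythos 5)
Your proposal is correct and takes essentially the same approach as the paper: the identical error equation for $\varphi$, the same energy, the test function $\partial_t\varphi$, and integration in time with vanishing initial energy. The details you supply beyond the paper's terse proof --- deriving the error equation via the Galerkin property of $\tPiH$ and identifying $(1-\calC)\calB_H\PiH u_0=\tPiH u_0$ (and likewise for $v_0$) so that $\varphi(0)=\partial_t\varphi(0)=0$ --- are exactly the justifications the paper leaves implicit when asserting $\frakE(0)=0$.
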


\begin{proof} The result follows from~\cite{Jol03}. For any $t\in[0,T]$, the function $\varphi(t)$ solves the semi-discrete problem
\begin{equation}\label{eq:phi_wave}
(\partial^2_t\varphi(t),\tvH)_{\LL}+a(\varphi(t),\tvH)=(\partial^2_t\rho(t),\tvH)_{\LL}.
\end{equation}
for all~$\tvH\in\tUH$.
We define the energy
\begin{equation*}
\frakE(t)=\tfrac{1}{2}\big[\|\partial_t\varphi(t)\|^2_{\LL}+a(\varphi(t),\varphi(t))\big],
\end{equation*}
choose $\tvH=\partial_t{\varphi}(t)$ as test function in~\eqref{eq:phi_wave}, and integrate in time from $0$ to $t$, we obtain
\begin{equation*}
\sqrt{\frakE(t)}\leq \sqrt{\frakE(0)}+\sqrt{2}\int\limits_0^t\|\partial^2_t\rho(s)\|_{ \LL}\ds.
\end{equation*}
Since~$\frakE(0)=0$, this is the assertion.
\end{proof}

\begin{lemma}[Projection error]\label{lem:rho}
Let $u$ be the solution to~\eqref{eq:var_form} and~$\tilde{\calP}$ the projection defined in~\eqref{eq:tPi}. Further, let $\rho(t)=u(t)-\tPiH u(t)$ and suppose that Assumption~\ref{ass:regularity} holds for some $k\in\N_0$. Further, suppose that $u\in C^{2+\nu}([0,T];H^{j+1}(\Omega))$ for some $j\in\N_0$ and $\nu=0,1,2$ and set $r=\min\{j+2,k+1,p+2\}$. Then there exists a constant~$\Con[p]>0$ dependent on the polynomial degree~$p$ and~$\alpha$ such that 
\begin{equation}
\begin{aligned}\label{eq:rho}
	\|\nabla\partial_t^\nu\rho(t)\|_{\LL}&\leq\Con[p]  H^{r}\big(\|\partial_t^\nu f\|_{C([0,T];H^{k}(\Omega))}+\|\partial_t^{2+\nu}{u}\|_{C([0,T];H^{j+1}(\Omega))}\big),\\
	\|\partial_t^\nu\rho(t)\|_{\LL}&\leq \Con[p]\Con[pr] H^{r+1}\big(\|\partial_t^\nu f\|_{C([0,T];H^{k}(\Omega))}+\|\partial_t^{2+\nu}{u}\|_{C([0,T];H^{j+1}(\Omega))}\big).
\end{aligned}
\end{equation}
\end{lemma}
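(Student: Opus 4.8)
The plan is to combine the defining $a$-orthogonality of the global multiscale space with a superconvergence (Aubin--Nitsche type) argument. The decisive structural observation is that $\tUH$ is precisely the $a$-orthogonal complement of the kernel space $\W\coloneqq\Ker\PiH\cap\V$. Indeed, by the definition~\eqref{eq:Cl_definition} of the corrector with $\ell=\infty$ one has $a((1-\calC)v,w)=0$ for all $w\in\W$, so $\tUH=(1-\calC)\UH\subseteq\W^{\perp_a}$; since $\PiH\colon\V\to\VH$ is onto, $\W$ has codimension $\dim\VH=\dim\tUH$, whence $\tUH=\W^{\perp_a}$ and $\V=\tUH\oplus\W$ ($a$-orthogonally). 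As $\tPiH$ is the $a$-orthogonal projection onto $\tUH$ and commutes with $\partial_t$, this yields for every admissible $\nu$ that
\[
\partial_t^\nu\rho(t)=(1-\tPiH)\partial_t^\nu u(t)\in\W,\qquad \PiH\partial_t^\nu\rho(t)=0 .
\]

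Next I would differentiate the variational formulation~\eqref{eq:var_form} $\nu$ times in time (legitimate because the added regularity $u\in C^{2+\nu}([0,T];H^{j+1}(\Omega))$ turns the duality pairing into an $L^2$ inner product, cf.~Remark~\ref{rem:regularity}), obtaining $(\partial_t^{2+\nu}u,v)_{\LL}+a(\partial_t^\nu u,v)=(\partial_t^\nu f,v)_{\LL}$ for all $v\in\V$. Using coercivity, the Galerkin orthogonality $a(\partial_t^\nu\rho,\tvH)=0$ for $\tvH\in\tUH$, and the $L^2$-orthogonality $\PiH\partial_t^\nu\rho=0$, I compute
\[
\alpha\|\nabla\partial_t^\nu\rho\|_{\LL}^2\le a(\partial_t^\nu\rho,\partial_t^\nu\rho)=a(\partial_t^\nu u,\partial_t^\nu\rho)=\big((1-\PiH)(\partial_t^\nu f-\partial_t^{2+\nu}u),\partial_t^\nu\rho\big)_{\LL}.
\]
Applying Cauchy--Schwarz together with~\eqref{eq:Cpr} for $k=1$ to $\partial_t^\nu\rho\in\W$, so that $\|\partial_t^\nu\rho\|_{\LL}=\|(1-\PiH)\partial_t^\nu\rho\|_{\LL}\le\Con[pr]H\|\nabla\partial_t^\nu\rho\|_{\LL}$, and dividing by $\|\nabla\partial_t^\nu\rho\|_{\LL}$ leaves
\[
\|\nabla\partial_t^\nu\rho\|_{\LL}\le\frac{\Con[pr]H}{\alpha}\,\big\|(1-\PiH)(\partial_t^\nu f-\partial_t^{2+\nu}u)\big\|_{\LL}.
\]

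To finish, I would bound the data term by a second application of~\eqref{eq:Cpr}, separately on $\partial_t^\nu f\in H^{k}(\Omega)$ with order $\min\{k,p+1\}$ and on $\partial_t^{2+\nu}u\in H^{j+1}(\Omega)$ with order $\min\{j+1,p+1\}$; since both exponents are at least $r-1=\min\{j+1,k,p+1\}$ and $H\lesssim1$, this produces the factor $H^{r-1}$ and hence the first estimate in~\eqref{eq:rho} with $\Con[p]\sim\Con[pr]^2/\alpha$. The $L^2$ bound then follows from one further use of~\eqref{eq:Cpr} with $k=1$, namely $\|\partial_t^\nu\rho\|_{\LL}\le\Con[pr]H\|\nabla\partial_t^\nu\rho\|_{\LL}$, which accounts for the additional power $H$ and the explicit factor $\Con[pr]$.

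The main obstacle is the structural identity $\tUH=\W^{\perp_a}$, i.e.\ that the projection residual lands in $\Ker\PiH$: this is the heart of the LOD construction and must be read off carefully from the definition of the corrector $\calC$ together with the codimension count, using that $\PiH$ maps $\V$ onto $\VH$ and $\dim\tUH=\dim\VH$. Once this is established, the remainder is a routine energy estimate; the only points requiring minor care are the legitimacy of differentiating the weak form in time and the truncation of the Sobolev order at $p+1$ in~\eqref{eq:Cpr}.
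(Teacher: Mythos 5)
Your proof is correct and follows essentially the same route as the paper's: coercivity, the Galerkin orthogonality of $\tPiH$, the time-differentiated equation to replace $a(\partial_t^\nu u,\cdot)$ by $L^2$ data terms, the key fact $\partial_t^\nu\rho(t)\in\ker\PiH$ so that $(1-\PiH)$ can be inserted, and two applications of~\eqref{eq:Cpr} (plus the extra factor $\Con[pr]H$ for the $L^2$ bound). The only difference is that you explicitly justify $\partial_t^\nu\rho(t)\in\ker\PiH$ via the identity $\tUH=\W^{\perp_a}$ and a dimension count, a structural fact of the LOD construction that the paper simply asserts.
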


\begin{proof} For any $t\in[0,T]$, we have that $\partial_t^\nu\rho(t)\in\ker\PiH$ and combined with~\eqref{eq:Cpr} we obtain
\begin{equation*}
	\|\partial_t^\nu\rho(t)\|_{\LL}=\|(1-\PiH)\partial_t^\nu\rho(t)\|_{\LL}\leq\Con[pr] H\|\nabla\partial_t^\nu\rho(t)\|_{\LL}.
\end{equation*}
Further, using the orthogonality property of $\PiH$ and~\eqref{eq:Cpr}, we have 
\begin{equation}\label{eq:rho_jk}
	\begin{aligned}
		\alpha\|\nabla\partial_t^\nu\rho(t)\|^2_{\LL}&\leq a\big(\partial_t^\nu\rho(t),\partial_t^\nu\rho(t)\big)=a\big(\partial_t^\nu u(t)-\tPiH \partial_t^\nu u(t),\partial_t^\nu u(t)-\tPiH \partial_t^\nu u(t)\big)\\
		&= a\big(\partial_t^\nu u(t),\partial_t^\nu u(t)-\tPiH \partial_t^\nu u(t)\big)\\
		&=\big(\partial_t^\nu f(t)-\partial_t^{2+\nu}u(t),\partial_t^\nu u(t)-\tPiH \partial_t^\nu u(t)\big)_{\LL}\\
		&= \big((1-\PiH)(\partial_t^\nu f(t)-\partial_t^{2+\nu}u(t)),(1-\Pi_H)(\partial_t^\nu u(t)-\tPiH \partial_t^\nu u(t))\big)_{\LL}\\
		&\leq\Con[pr]\big(H^{k}\|\partial_t^\nu f(t)\|_{H^{k}(\Omega)}+H^{j+1}\|\partial_t^{2+\nu}u(t)\|_{H^{j+1}(\Omega)}\big)\Con[pr] H\|\nabla\partial_t^\nu\rho(t)\|_{\LL}.
	\end{aligned}
\end{equation}
Combining both equations gives the desired result.
\end{proof}

As mentioned above, the error estimate for the full error of the \splod~method is split into four parts, which we have analyzed above and can now put together to obtain the main result.

\begin{proof}[Proof of Theorem \ref{thm:error}.]
Let $e^n=\tul{n}-u(t_n)$. We have
\begin{equation}\label{eq:error_split}
e^n=\zeta^n-\eta(t_n)+\varphi(t_n)-\rho(t_n),
\end{equation}
where $\zeta^n=\tul{n}-\tilde{z}_H^\ell(t_{n})$, $\eta(t)=\tilde{z}_H(t)-\tilde{z}_H^\ell(t)$, $\varphi(t_n)=\tzH{n}-\tPiH u(t_n)$ and $\rho(t)=u(t)-\tPiH u(t)$ as in the above lemmas. The full error of the $p$-LOD-$\theta$ method can thus be bounded by
\begin{equation}
\begin{aligned}\label{eq:error_sum}
\|\Dt e^{\halfpos}\|_{\LL}+\|\nabla e^{\halfpos}\|_{\LL}&\leq  \|\Dt \zeta^{\halfpos}\|_{\LL}+\|\nabla \zeta^{\halfpos}\|_{\LL}\\
&\quad+\|\Dt \eta(t_{\halfpos})\|_{\LL}+\|\nabla \eta(t_{\halfpos})\|_{\LL}\\
&\quad +\|\Dt \varphi(t_{\halfpos})\|_{\LL}+\|\nabla \varphi(t_{\halfpos})\|_{\LL}\\
&\quad +\|\Dt \rho(t_{\halfpos})\|_{\LL}+\|\nabla \rho(t_{\halfpos})\|_{\LL}.
\end{aligned}
\end{equation}
Using a Taylor expansion of $\Dt\varphi(t_{\halfpos})$, Lemma~\ref{lem:phi}, and the continuity in time, there exist a $\xi\in[0,t_{n+1}]$ with 
\begin{equation*}
\|\Dt \varphi(t_{\halfpos})\|_{\LL}+\|\nabla \varphi(t_{\halfpos})\|_{\LL}\leq2\Con[sdp]\int\limits_0^{t_{n+1}}\|\partial_t^2\rho(s)\|_{\LL}\leq2\Con[sdp] T\|\partial_t^2\rho(\xi)\|_{\LL}.
\end{equation*}
With a Taylor expansion of $\Dt\rho(t_{\halfpos})$ and applying Lemma~\ref{lem:rho} we get 
\begin{equation*}
\begin{aligned}
\|\nabla \rho(t_{\halfpos})\|_{\LL}&\lesssim H^r\big(\|f\|_{C([0,T];H^{k}(\Omega))}+\|u\|_{C^2([0,T];H^{j+1}(\Omega))}\big),\\
\|\Dt \rho(t_{\halfpos})\|_{\LL}&\lesssim H^{r+1}\big(\|f\|_{C^1([0,T];H^{k}(\Omega))}+\|u\|_{C^3([0,T];H^{j+1}(\Omega))}\big),\\
\|\partial_t^2 \rho(\xi)\|_{\LL}&\lesssim H^{r+1}\big(\|f\|_{C^2([0,T];H^{k}(\Omega))}+\|u\|_{C^4([0,T];H^{j+1}(\Omega))}\big).
\end{aligned}
\end{equation*}
Finally, with~\eqref{eq:error_sum}, Lemma~\ref{lem:zeta}, and Lemma~\ref{lem:eta} with $\ell\gtrsim r|\log H|$, we obtain using a Taylor expansion on $D_\tau\eta(t_{\halfpos})$
\begin{equation*}
\|\Dt e^{\halfpos}\|_{\LL}+\|\nabla e^{\halfpos}\|_{\LL}\lesssim (H^r+\tau^s)\,\Con[data]^{\,j}\varepsilon^{-j}.
\end{equation*}
Note that the hidden constant depends on the final time $T$, $\delta$, $\alpha$, $\beta$, and the polynomial degree $p$ and is independent of the mesh size $H$, the time step size $\tau$, and the scale $\varepsilon$ on which the coefficient varies. Note that $s=2$ if $\theta\neq\frac{1}{12}$ and~$s=4$ if $\theta=\frac{1}{12}$. 
\end{proof}

\begin{remark}\label{rem:jk}
If the source function $f$ is smooth enough in space and time, i.e. $k\geq 2$, the critical term with the lowest order in the full error estimate is the term $H^{j+2}\,|\partial_t^2u|_{H^{j+1}}$; cf.~the last term in equation \eqref{eq:error_sum} estimated with \eqref{eq:rho_jk}.
From Assumption~\ref{ass:regularity}, we know that $j\geq 0$ such that we get a convergence rate of at least order $r\geq 2$. In the case of rough~$L^\infty$-coefficients, we cannot expect more than $H^1$-regularity in space for the function~$\partial_t^2{u}$. That is, $r=2$ is the highest possible rate under minimal assumptions on the coefficient. This reduced order can be explained by the $p$-LOD method being designed for the elliptic case where we can obtain the higher order rates from the right-hand sides only. This property is not directly inherited for the wave equation as we can see in~\eqref{eq:rho_jk}, where we also need regularity assumptions on the solution to obtain even higher orders of convergence.

We note, however, that the constant in front of the $H^2$-term scales like~$\Con[pr]^2\lesssim (p+1)^{-2}$ such that we expect the error to be smaller even if the rate does not increase. This scaling is also observed in the numerical examples in Section~\ref{sec:examples}. 
If the coefficient~$A$ is smoother, we may obtain higher regularity of $\partial_t^2{u}$ and, thus, an increased convergence rate, which is then capped at $r= p+2$. Note that if the coefficient is smooth but still has multiscale features, then the higher rate generally cannot be observed, since the spatial derivatives scale negatively with $\varepsilon$ if the coefficient varies on the scale $\varepsilon$. In the next section, we present several numerical examples which indicate the reduced order in the general $L^\infty$-setting and a higher order convergence for smooth and slowly oscillating coefficients.
\end{remark}

\begin{remark}\label{rem:L2}
Analogously to \cite{Kar11}, we may obtain an $L^\infty(0,T;L^2(\Omega))$-error estimate that essentially reads
\begin{equation*}
\max_{0\leq n\leq N}\|\tuH{n}-u(t_n)\|_{\LL}\lesssim (H^{r+1}+\tau^s)\,\Con[data]^{\,j}\varepsilon^{-j}.
\end{equation*}
That is, compared to the $L^\infty(0,T;H^1(\Omega))$-error estimate shown above we obtain an additional order in space with $r\leq p+2$, dependent on the regularity of $f$ and $\partial_t^2u$ with analogue argumentation as in Remark~\ref{rem:jk}. Further, we have $s=2$ if $\theta\neq\frac{1}{12}$ and $s=4$ if~$\theta=\frac{1}{12}$.
\end{remark}

\section{Numerical examples}\label{sec:examples}
\begin{figure}
\centering
\scalebox{.7}{\includegraphics{./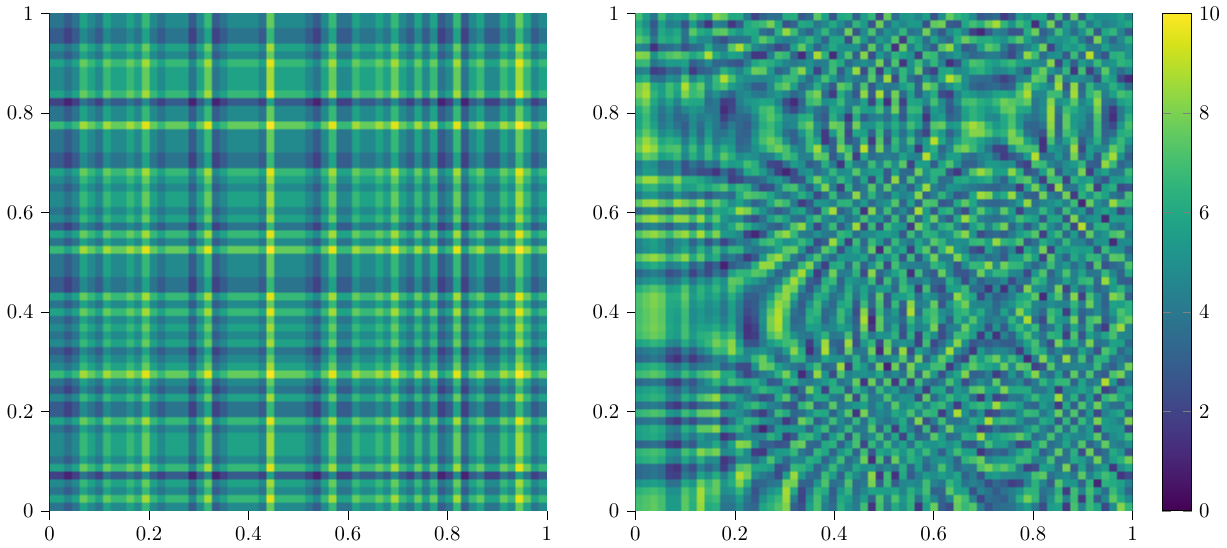}}
\caption{Coefficient $A_1$ (left) with values between $1$ and $10$ and coefficient $A_2$ (right) with values between $1$ and $9$. Both coefficients vary on the scale $\varepsilon=2^{-6}$. }
\label{fig:coefficients}
\end{figure}

In this section, we present several examples to verify the results presented in the previous sections. So far, we have assumed that the basis functions in $\tUl$ can be computed exactly. This is not possible in practice since the corrector problems~\eqref{eq:Cl_definition} are posed in infinite-dimensional spaces. Instead, we choose a finite element space $V_h\subset\V$ and replace the space~$\V$ by $V_h$ with mesh size $h\ll H$ in the above construction, which leads to error estimates with respect to a fine Galerkin finite element solution~$u_h$ instead of~$u$ with minor modifications in the above analysis. Roughly speaking, this also corresponds to a (fine enough) discretization $\Cl_h$ of the correction operator $\Cl$. More details can be found in~\cite[Sec.~4.3]{Mai21}. In the following, we therefore compute fully discrete {\splod}~solutions $\tilde{u}_{H,h}^{\ell,n}\in(1-\Cl_h)U_H\subset V_h$ and compare the results to a reference Galerkin finite element approximation $u_h\in V_h$ of~\eqref{eq:var_form}.

Throughout, we choose $\Omega=(0,1)^2$ with $T=1$ and the two multiscale coefficients $A_1$ and~$A_2$ shown in Figure~\ref{fig:coefficients}, which both oscillate on the scale $\varepsilon=2^{-6}$. The first coefficient~$A_1$ takes values between~$1$ and $10$ and the second coefficient $A_2$ varies between~$1$ and~$9$. The reference solution and the corrector problems are solved on meshes with mesh size $h=2^{-8}$, and we measure the errors at the final time $T=1$ in the norm~$|\cdot|_a = \sqrt{a(\cdot, \cdot)}$ if not stated otherwise. In the plots, we show errors with respect to the mesh size $H$ and the number of degrees of freedom $\calN$, respectively. 

\begin{figure}
	\centering
	\begin{subfigure}{.5\textwidth}
		\centering
		\scalebox{.6}{\includegraphics{./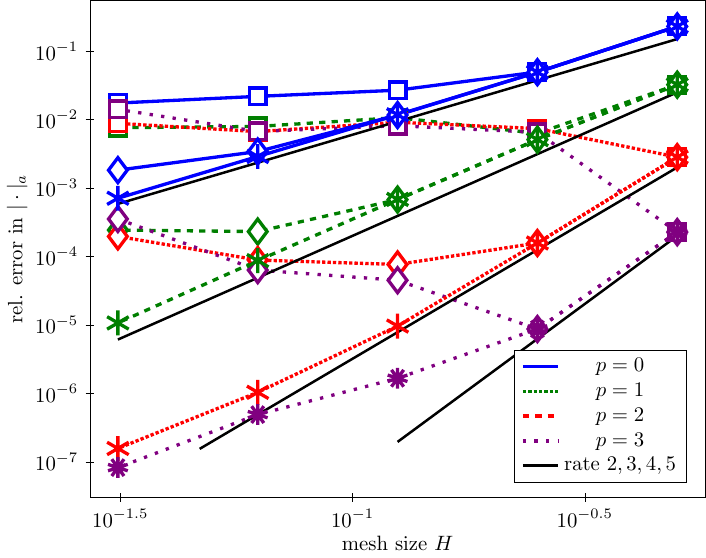}}
	\end{subfigure}%
	\begin{subfigure}{.5\textwidth}
		\centering
		\scalebox{.6}{\includegraphics{./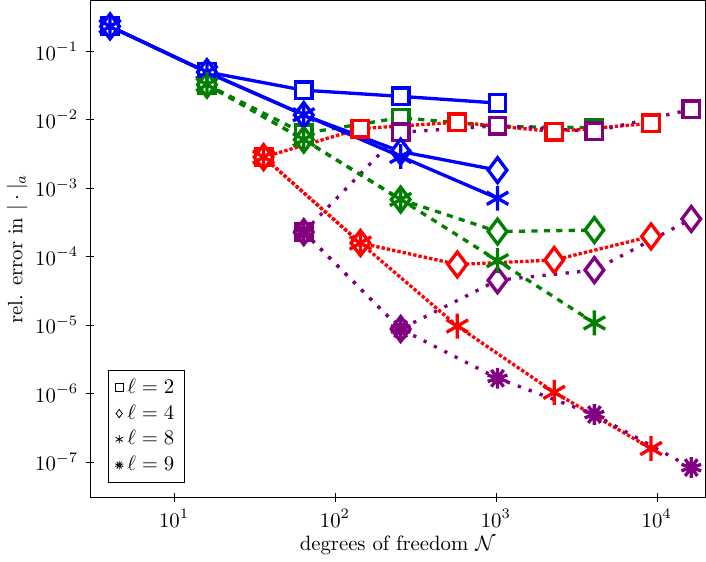}}
	\end{subfigure}
	\caption{Relative $|\cdot|_a$-errors for the coefficient $A_1$ with respect to the coarse mesh size $H$ (left) and the degrees of freedom $\calN$ (right) for different polynomial degrees $p$ and localization parameters~$\ell$.}
	\label{fig:A1}
\end{figure}

\subsection*{First example} 
For the first example, we prescribe homogeneous initial conditions $u_0=v_0=0$, a right-hand side~$f=\sin(\pi x_1)\sin(\pi x_2)\sin^4(t)$, and use the coefficient $A_1$ depicted in Figure~\ref{fig:coefficients} (left). We use the Crank--Nicolson scheme, i.e., $\theta=\frac{1}{4}$ for the time integration with a constant time step size $\tau=2^{-9}$ for all \splod~solutions as well as the reference solution to investigate the spatial errors only. The results are plotted in Figure~\ref{fig:A1}.
In the left plot, we observe a convergence rate of at least two with respect to the mesh size~$H$ for each of the polynomial degrees, which is generally in line with the error analysis in the previous section. Moreover, we observe that the errors for $p=1$ converge with order three, which is higher than expected, and even for $p=2$ and larger values of~$H$ a higher order rate can be observed. This may be explained by the fact that the term that leads to the (reduced) second order convergence is of lower magnitude such that its effect is only observed for smaller values of the error, where the reduced rate is clearly visible. We emphasize that the observed reduction of the convergence rate is not related to the localization error as it persists also when increasing~$\ell$. 
In the right plot, the error is depicted with respect to the number of degrees of freedom~$\calN$ and we observe that higher polynomial degrees overall lead to smaller errors such that moderately increasing the polynomial degree appears to be beneficial even if arbitrarily high orders of convergence cannot be expected. The reasoning for this observation is the scaling of the constants in the above error estimates, which scale like~$(p+1)^{-2}$ (cf. Remark \ref{rem:jk}). 
Both plots in Figure~\ref{fig:A1} show that for a small localization parameter~$\ell$, especially with higher polynomial degrees~$p$, the error stagnates with decreasing mesh size, which is related to the regimes where the localization error dominates. In order to increase visibility of further plots, we omit all lines with a small localization parameter and instead indicate with the marker which localization is needed for each polynomial degree and mesh size. This also makes sense as in practice one would choose the lowest possible localization that still yields the desired convergence behavior.

\begin{figure}
	\centering
	\begin{subfigure}{.5\textwidth}
		\centering
		\scalebox{.6}{\includegraphics{./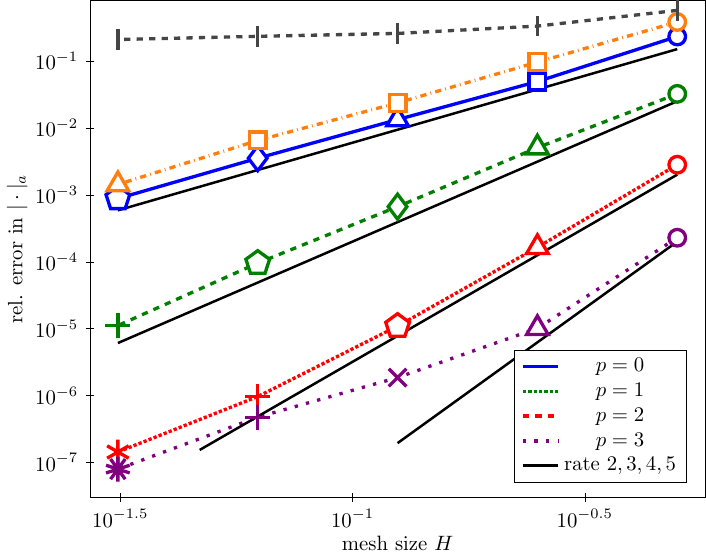}}
	\end{subfigure}%
	\begin{subfigure}{.5\textwidth}
		\centering
		\scalebox{.6}{\includegraphics{./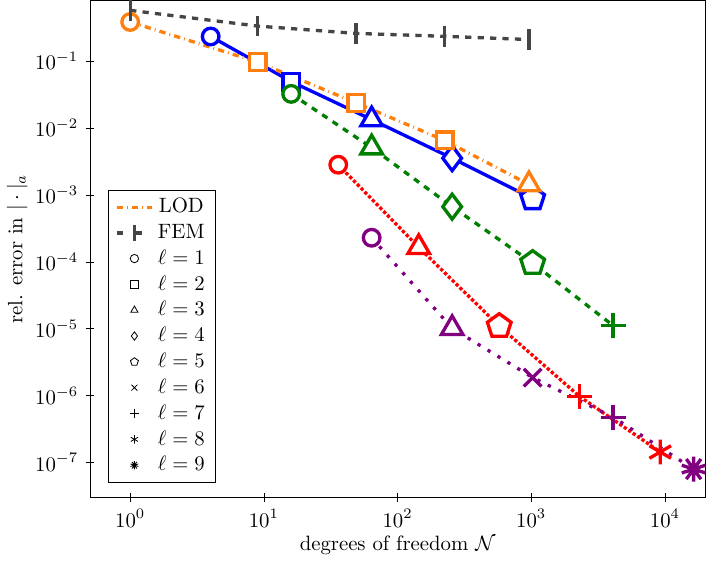}}
	\end{subfigure}
	\caption{Relative $|\cdot|_a$-errors for the second example with respect to the coarse mesh size $H$ (left) and the number of degrees of freedom $\calN$ (right) for different polynomial degrees $p$ and localization parameters~$\ell$.}
	\label{fig:A2}
\end{figure}

\subsection*{Second example} 

In Figure~\ref{fig:A2}, we show the errors of a second example based on the coefficient $A_2$ depicted in Figure~\ref{fig:coefficients} (right) as well as the initial conditions, the right-hand side, and the fine discretization parameters~$h$ and~$\tau$ as in the first example. In particular, we present the behavior of the \splod~method with~$\theta=\frac{1}{4}$ (Crank--Nicolson scheme) and compare it with other spatial discretization approaches, namely the classical FEM and the original first-order LOD method. We observe that the errors of the FEM are in the pre-asymptotic regime, where the mesh size~$H$ does not resolve the oscillation scale~$\varepsilon$ and the FEM solution fails to approximate the solution. The LOD method as used in~\cite{AbdH17} converges with order two, and the size of the errors are comparable to the errors of the \splod~method with polynomial degree $p=0$. From Figure~\ref{fig:A2} (right), we particularly observe that much lower errors with respect to the number of degrees of freedom compared to the original LOD method can be achieved by increasing the polynomial degree.
Analogously to above, we partially observe higher-order rates, which only for smaller magnitudes of the error are capped by two as predicted by the theory. 

\begin{figure}
	\centering
	\begin{subfigure}{.5\textwidth}
		\centering
		\scalebox{.6}{\includegraphics{./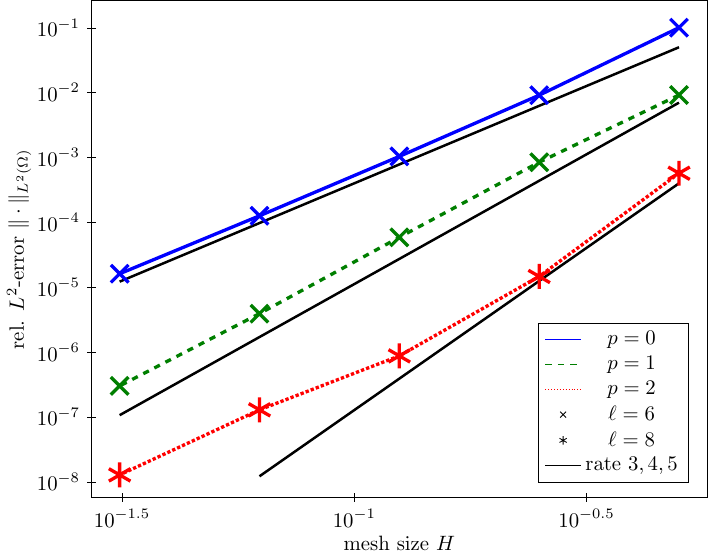}}
	\end{subfigure}%
	\begin{subfigure}{.5\textwidth}
		\centering
		\scalebox{.6}{\includegraphics{./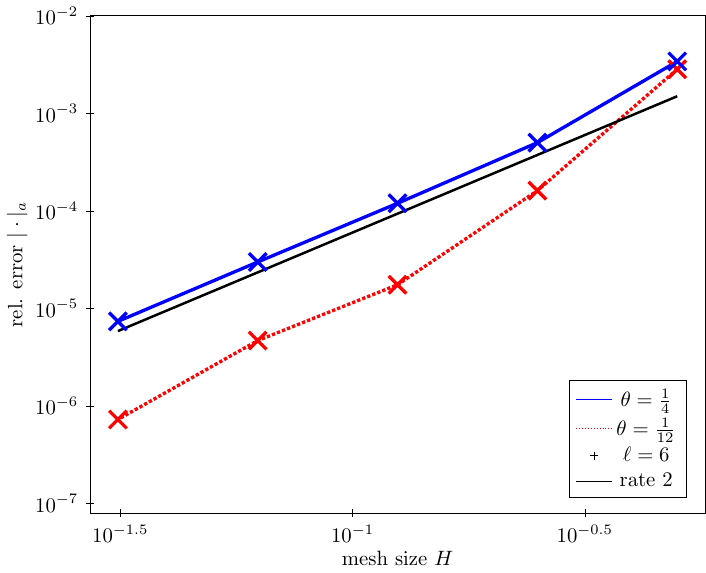}}
	\end{subfigure}
	\caption{Relative $L^2$-errors for the second example for $\theta=\frac{1}{12}$ (left) and relative $|\cdot|_a$-errors for the second example for $\theta=\frac{1}{4}$ compared to $\theta=\frac{1}{12}$ for $p=2$ (right) with time step sizes dependent on $H$ and $h$, respectively.}
	\label{fig:th}
\end{figure}

Since the convergence rates of the \splod~method are capped at $r=2$ in the most general $L^\infty$-setting, a general question is whether a higher-order time discretization is useful in the first place. An illustration is given in Figure~\ref{fig:th}. The plot on the left shows the relative errors in the $L^2$-norm in space. Here we choose~$\theta=\frac{1}{12}$ with coarse time step sizes~$\mathfrak{T}=2^{-5}H$ and fine time step sizes $\tau=2^{-5}h$ for the reference solution such that the CFL condition~\eqref{eq:CFL} holds. We observe convergence rates of one additional order compared to the errors measured in~$|\cdot|_a$, see Remark~\ref{rem:L2} and Figure~\ref{fig:A2}. The convergence rate for the Crank--Nicolson scheme in this example would be capped at $s=2$ and the higher-order would not be observed. 
Figure~\ref{fig:th}~(right) gives another reason to why the choice~$\theta=\frac{1}{12}$ can be better suited compared to other choices of~$\theta$. We choose both~$\theta = \frac14$ and~$\theta=\frac{1}{12}$ and time step sizes that depend on the mesh size, i.e., $\mathfrak{T}=2^{-5}H$ and $\tau=2^{-5}h$. Note that this choice of time steps is only required for the highest polynomial degree and could be reduced for smaller $p$. We observe that for the Crank--Nicolson scheme the relative $|\cdot|_a$-errors eventually hit the temporal error plateau. For the scheme with~$\theta=\frac{1}{12}$ we do obtain smaller errors since the temporal error plateau is not reached. 

\begin{figure}
	\centering
	\begin{subfigure}{.5\textwidth}
		\centering
		\scalebox{.6}{\includegraphics{./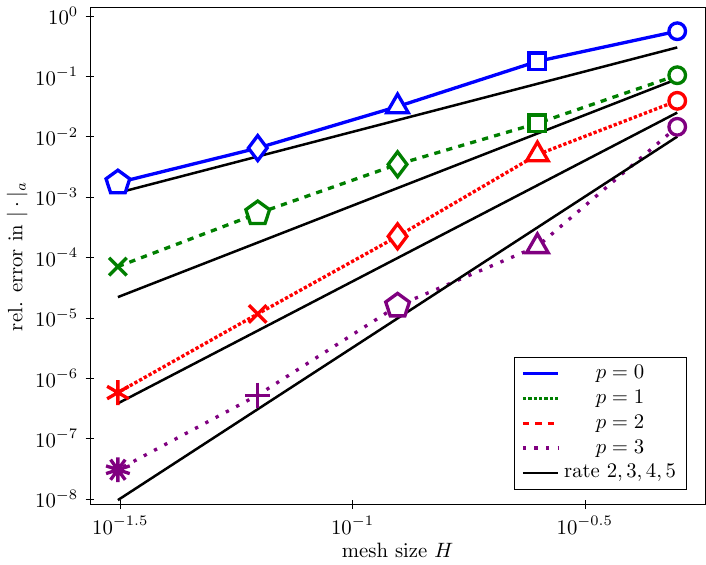}}
	\end{subfigure}%
	\begin{subfigure}{.5\textwidth}
		\centering
		\scalebox{.6}{\includegraphics{./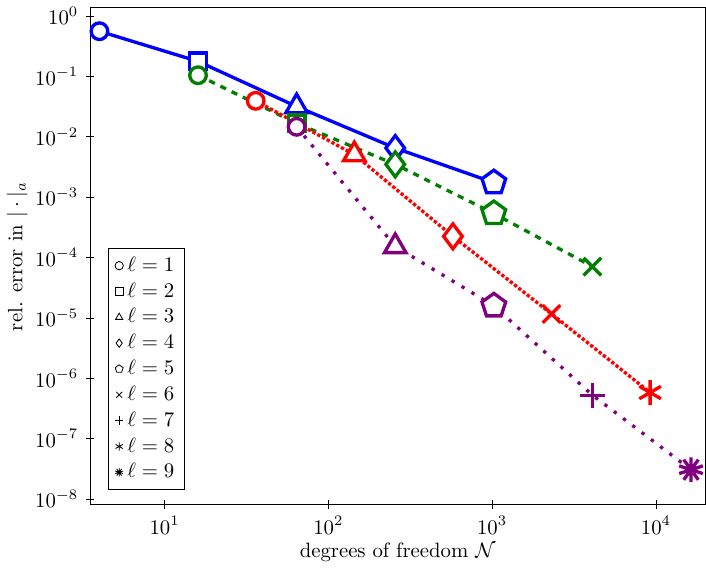}}
	\end{subfigure}
	\caption{Relative $|\cdot|_a$-errors for the third example with smooth coefficient and right-hand side for $\theta=\frac{1}{12}$ with respect to the coarse mesh size $H$ (left) and the number of degrees of freedom $\calN$ (right) for different polynomial degrees $p$ and localization parameters~$\ell$.}
	\label{fig:smooth}
\end{figure}

\subsection*{Third example}
In Figure \ref{fig:smooth}, we show the errors of a third example involving a smooth coefficient $A_3(x)=1+\frac{1}{2}\sin(x_1)\sin(2x_2)$, the right-hand side $f=\sin^4(\pi x_1)\sin^4(\pi x_2)\sin^4(t)$, and homogeneous initial conditions $u_0=v_0=0$. The depicted results are generally in line with Remark~\ref{rem:jk}. 
More precisely, in this example we have additional regularity of~$u$ in space and time, in particular the scaling of $H^{j}$-norms of $\partial_t^2u$ for some $j\in\N_0$ is not severe, since the coefficient only oscillates on a rather coarse scale. Based on Remark~\ref{rem:jk}, convergence rates in space of order $r=p+2$ can be expected. The plots in Figure~\ref{fig:smooth} show the errors of the \splod~method with $\theta=\frac{1}{12}$, $\mathfrak{T}=2^{-5}H$ and $\tau=2^{-5}h$, and we observe the additional rates as expected from the theory, which are capped at $4$ due to the temporal error.

\section{Conclusion}
In this paper, we have presented a method to solve the acoustic wave equation with oscillating coefficients, where we combined a higher-order extension of the LOD method for the spatial discretization with the $\theta$-scheme for time stepping. We have given rigorous a-priori error estimates and observed that arbitrarily high orders of convergence in space as in the elliptic case cannot be expected for the wave equation if only minimal assumptions on the coefficient hold. 
Nevertheless, our method can still achieve smaller errors by increasing the polynomial degree compared to lower-order approaches with a similar amount of degrees of freedom. This justifies a suitable increase of the polynomial degree. Further, if the coefficient is sufficiently smooth, higher-orders can be achieved.

Future research aims at modifying the presented approach to be better suited for an application to the heterogeneous wave equation. In particular, we aim at obtaining a truly higher-order method in space and time already under minimal assumptions on the coefficient.

%%%%%%%%%%%%%%

\section*{Acknowledgments}
We would like to thank Moritz Hauck for fruitful discussions regarding the implementation of the method.

\section*{Funding}
Funding from the Deutsche Forschungsgemeinschaft (DFG, German Research Foundation) -- Project-ID 258734477 -- SFB 1173 is gratefully acknowledged.

\appendix

\section{Additional regularity}\label{app:regularity}
In this section, we provide the sufficient assumptions to obtain the higher regularity which is needed in Theorem~\ref{thm:error} to enable higher-order convergence rates. Note that we need the coefficient $A$ and also the boundary $\partial\Omega$ to be smooth enough and $A$ not to have multiscale features, i.e., the coefficient may only oscillate on a rather coarse scale $\varepsilon$. This is due to the norms of the spatial derivatives of the solution scaling negatively with $\varepsilon$. We fix $k,j\in\N$ with $k\geq j+2$ and introduce a stricter version of Assumption~\ref{ass:regularity}.
\begin{assumption}\label{ass:advanced}
	Assume that
	\begin{itemize}
		\item[(B0)]  $f\in C^{5}([0,T];H^k(\Omega)),\quad u(0)=u_0\in H^{j+3}(\Omega), \quad \partial_tu(0)=v_0\in H^{j+2}(\Omega)$,
		\item[(B1)]  $u_0\in \V,\quad v_0\in \V$,
		\item[(B2)]  $\partial_t^mu(0)\coloneqq \partial_t^{m-2}f(0)+\ddiv(A\nabla (\partial_t^{m-2}u(0)))\in \V$, for $m=2,\dots, \max\{5, j+2\}$,
		\item[(B3)]  $\partial_t^{\mathfrak{n}}u(0)\coloneqq\partial_t^{\mathfrak{n}-2}f(0)+\ddiv(A\nabla(\partial_t^{\mathfrak{n}-2} u(0)))\in \LL$, for $\mathfrak{n}=\max\{6, j+3\}$,
		\item[(B4)]  there exists a constant $\Con[init]^{j}>0$ (independent of $\varepsilon$) such that
		\begin{equation*}
			\|u_0\|_{H^{j+3}(\Omega)}+\|v_0\|_{H^{j+2}(\Omega)}+\sum_{m=2}^{\max\{5, j+2\}}\|\partial_t^mu(0)\|_{\V}+\|\partial_t^{\mathfrak{n}}u(0)\|_{\LL}\leq \Con[init]^{j}.
		\end{equation*}
	\end{itemize}
\end{assumption}

With similar arguments as in~\cite[Sec.~7.2]{Eva10} we then obtain under sufficient smoothness assumptions on $A$ and $\partial \Omega$ in addition to the regularity in Remark \ref{rem:regularity} that 
\begin{equation*}
	u\in C^m([0,T];H^{j+3-m}(\Omega)),\qquad m=0,\dots,j+3.
\end{equation*}
We can bound the norms of $u$ by the right-hand side and initial conditions as follows. Let~$k\geq j+2$ and the Assumption~\ref{ass:advanced} be fulfilled. Then, there exists a generic constant $\Con[data]^{j}>0$ that depends on the regularity parameter $j$ and is independent of the oscillation scale $\eps$, such that 
\begin{equation*}
	\begin{aligned}
		\sum_{m=0}^{3}\|\partial_t^mu\|_{C([0,T];H^{j+3-m}(\Omega))}&+\sum_{m=4}^{5}\|\partial_t^mu\|_{C([0,T];H^{1}_0(\Omega))}+\|\partial_t^6u\|_{C([0,T];\LL)}\\
		&\lesssim \varepsilon^{-j} \left[\|f\|_{C^{5}([0,T];H^{j+2}(\Omega))}+\Con[init]^{j}\right]
		\leq\Con[data]^{j}\varepsilon^{-j},\qquad j=1,\\
		\sum_{m=0}^{4}\|\partial_t^mu\|_{C([0,T];H^{j+3-m}(\Omega))}&+\|\partial_t^5u\|_{C([0,T];H^{1}_0(\Omega))}+\|\partial_t^6u\|_{C([0,T];\LL)}\\
		&\lesssim \varepsilon^{-j} \left[\|f\|_{C^{5}([0,T];H^{j+2}(\Omega))}+\Con[init]^{j}\right]
		\leq\Con[data]^{j}\varepsilon^{-j},\qquad j=2,\\
		\sum_{m=0}^{j+3}\|\partial_t^mu\|_{C([0,T];H^{j+3-m}(\Omega))}&\lesssim \varepsilon^{-j} \left[\|f\|_{C^{5}([0,T];H^{j+2}(\Omega))}+\Con[init]^{j}\right]\leq\Con[data]^{j}\varepsilon^{-j},\qquad j>2.
	\end{aligned}
\end{equation*}

\end{document}